\theoremstyle{plain}
\theoremstyle{plain}
\newtheorem{theorem}{Theorem}[section]
\newtheorem{proposition}[theorem]{Proposition}
\newtheorem{lemma}[theorem]{Lemma}
\newtheorem{corollary}[theorem]{Corollary}
\theoremstyle{remark}
\newtheorem{remark}[equation]{Remark}
\theoremstyle{definition}
\newcommand{\cal}{\EuScript}
\renewcommand{\vec}[1]{\boldsymbol{#1}}
\newcommand{\vol}{\textup{vol}}
\newif\iffinalrun
  \newcommand{\need}[1]{}
  \newcommand{\mar}[1]{}
  \newcommand{\need}[1]{{\tiny *** #1}}
  \newcommand{\mar}[1]{\marginpar{\raggedright\tiny Fix Me:  #1 }}\fi
\renewcommand{\contentsname}{}
\begin{document}
\title{New upper bounds for spherical codes and packings}
\author{Naser Talebizadeh Sardari and Masoud Zargar}
\address{Penn State department of Mathematics, McAllister Building, Pollock Rd, State College, PA 16802 USA}
\email{nzt5208@psu.edu}

\address{Max Planck Institute for Mathematics, Vivatsgasse 7, 53111 Bonn, Germany}
\email{mzargar@usc.edu}
\renewcommand{\contentsname}{}
\begin{abstract}
We  improve the previously  best  known upper bounds on the sizes of $\theta$-spherical codes for every $\theta<\theta^*\approx 62.997^{\circ}$
 at least by a factor of $0.4325$, in sufficiently high dimensions. Furthermore, for sphere packing densities in dimensions $n\geq 2000$ we have an improvement at least by a factor of $0.4325+\frac{51}{n}$. Our method also breaks many non-numerical sphere packing density bounds in smaller dimensions. This is the first such improvement for each dimension since the work of Kabatyanskii and Levenshtein~\cite{KL} and its later improvement by Levenshtein~\cite{Leven79}. Novelties of this paper include the analysis of triple correlations, usage of the concentration of mass in high dimensions, and the study of the spacings between the roots of Jacobi polynomials.
\end{abstract}
\maketitle
\setcounter{tocdepth}{1}
\tableofcontents
\section{Introduction}\label{intro}
\subsection{Spherical codes and packings}
Packing densities have been studied extensively, for purely mathematical reasons as well as for their connections to coding theory. The work of Conway and Sloane is a comprehensive reference for this subject~\cite{Conway}. We proceed by defining the basics of this subject.  
Consider $\mathbb{R}^n$ equipped with the Euclidean metric $|.|$ and the associated volume $\vol(.)$. For each real $r>0$ and each $x\in\mathbb{R}^n$, we denote by $B_n(x,r)$ the open ball in $\mathbb{R}^n$ centered at $x$ and of radius $r$. For each discrete set of points $S\subset\mathbb{R}^n$ such that any two distinct points $x,y\in S$ satisfy $|x-y|\geq 2$, we can consider
\[\cal{P}:=\cup_{x\in S}B_n(x,1),\]
the union of non-overlapping unit open balls centered at the points of $S$. This is called a \textit{sphere packing} ($S$ may vary), and we may associate to it the function mapping each real $r>0$ to
\[\delta_{\cal{P}}(r):=\frac{\vol(\cal{P}\cap B_n(0,r))}{\vol(B_n(0,r))}.\]
The \textit{packing density} of $\cal{P}$ is defined as
\[\delta_{\cal{P}}:=\limsup_{r\rightarrow\infty}\delta_{\cal{P}}(r).\]
Clearly, this is a finite number. The \textit{maximal sphere packing density} in $\mathbb{R}^n$ is defined as
\[\delta_n:=\sup_{\cal{P}\subset\mathbb{R}^n}\delta_{\cal{P}},\]
a supremum over all sphere packings $\cal{P}$ of $\mathbb{R}^n$ by non-overlapping unit balls.\\
\\
The linear programming method initiated by Delsarte is a powerful tool for giving upper bounds on sphere packing densities~\cite{Delsarte1}.
That being said, we only know the optimal sphere packing densities in dimensions 1,2,3,8 and 24 \cite{Fejes,Hales,Maryna1,Maryna2}. Very recently, the first author proved an optimal upper bound on the sphere packing density of all but a tiny fraction of even unimodular lattices in high dimensions; see~\cite[Theorem 1.1]{Sardari}.\\
\\
The best known \textit{linear} programming upper bounds on sphere packing densities in low dimensions are based on the linear programming method developed by Cohn--Elkies~\cite{Elkies} which itself was inspired by Delsarte's linear programming method. As far as the exponent is concerned, in high dimensions, the best asymptotic upper bound goes back to Kabatyanskii--Levenshtein from 1978~\cite{KL} stating that $\delta_n\leq 2^{-(0.599+o(1))n}$ as $n\rightarrow\infty$. More recently, de Laat--de Oliveira Filho--Vallentin  improved upper bounds in very low dimensions using the semi-definite programming method~\cite{Frank}, partially based on the semi-definite programming method developed by Bachoc--Vallentin~\cite{BV} for bounding kissing numbers. The work of Bachoc--Vallentin was further improved by Mittelmann--Vallentin~\cite{MittelVallentin}, Machado--de Oliveira Filho~\cite{MachadoOliveira}, and very recently after the writing of our paper by de Laat--Leijenhorst~\cite{LL}.\\
\\
Another recent development is the discovery by Hartman--Maz\'ac--Rastelli~\cite{HMR} of a connection between the spinless modular bootstrap for two-dimensional conformal field theories and the linear programming bound for sphere packing densities. After the writing of our paper, Afkhami-Jeddi--Cohn--Hartman--de Laat--Tajdini~\cite{ACHLT} numerically constructed solutions to the Cohn--Elkies linear programming problem and conjectured that the linear programming method is capable of producing an upper bound on sphere packing densities in high dimensions that is exponentially better than that of Kabatyanskii--Levenshtein.\\
\\
A notion closely related to sphere packings in Euclidean spaces is that of spherical codes. By inequalities relating sphere packing densities to the sizes of spherical codes, Kabatyanskii--Levenshtein~\cite{KL} obtained their bound on sphere packing densities stated above. The sizes of spherical codes are bounded from above using Delsarte's linear programming method. In what follows, we define spherical codes and this linear programming method.\\
\\
Given $S^{n-1}$, the unit sphere in $\mathbb{R}^n$, a $\theta$-\textit{spherical code} is a finite subset $A\subset S^{n-1}$  such that no two distinct $x,y\in A$ are at an angular distance less than $\theta$. For each $0<\theta\leq\pi$, we define $M(n,\theta)$ to be the largest cardinality of a $\theta$-spherical code $A\subset S^{n-1}.$\\
\\
The Delsarte linear programming method is applied to spherical codes as follows. Throughout this paper, we work with \textit{probability} measures $\mu$ on $[-1,1].$ $\mu$  gives an inner product on the $\mathbb{R}$-vector space of real polynomials $\mathbb{R}[t]$, and let $\{p_i\}_{i=0}^{\infty}$ be an orthonormal basis with respect to $\mu$ such that the degree of $p_i$ is $i$ and $p_i(1)>0$ for every $i$. Note that $p_0=1$. Suppose that the basis elements $p_k$ define positive definite functions on $S^{n-1}$, that is,
\begin{equation}\label{positivedef}\sum_{x_i,x_j\in A}h_ih_jp_k\left(\left<x_i,x_j\right>\right)\geq 0
\end{equation}
for any finite subset $A\subset S^{n-1}$ and any real numbers $h_i\in \mathbb{R}.$ An example of a probability measure satisfying inequality~\eqref{positivedef} is
\[d\mu_{\alpha}=\frac{(1-t^2)^{\alpha}}{\int_{-1}^1(1-t^2)^{\alpha}dt} dt,\]
where $\alpha\geq\frac{n-3}{2}$ and $2\alpha\in \mathbb{Z}$. Given $s\in[-1,1]$, consider the space $D(\mu,s)$ of all functions $f(t)=\sum_{i=0}^{\infty}f_ip_i(t)$, $f_i\in\mathbb{R}$, such that
\begin{enumerate}\label{Del}
\item $f_i\geq 0$ for every $i$, and $f_0>0,$ \label{newlp}
\item $f(t)\leq 0$ for $-1\leq t\leq s.$
\end{enumerate}
Suppose $0<\theta<\pi$, and $A=\{x_1,\hdots,x_N\}$ is a $\theta$-spherical code in $S^{n-1}$. Given a function $f\in D(\mu,\cos\theta)$, we  consider
\[\sum_{i,j}f(\left<x_i,x_j\right>).\]
This may be written in two different ways as
\[Nf(1)+\sum_{i\neq j}f(\left<x_i,x_j\right>)=f_0N^2+\sum_{k=1}^{\infty}f_k\sum_{i,j}p_k(\left<x_i,x_j\right>).\]
Since $f\in D(\mu,\cos\theta)$ and $\left<x_i,x_j\right>\leq \cos\theta$ for every $i\neq j$, this gives us the inequality
\[N\leq\frac{f(1)}{f_0}.\]
We define 
\begin{equation}\label{Ldef}\cal{L}(f):=\frac{f(1)}{f_0}.\end{equation}
In particular, this method leads to the linear programming bound
\begin{equation}\label{Dbound}M(n,\theta)\leq \inf_{f\in D(d\mu_{\frac{n-3}{2}},\cos\theta)}\cal{L}(f).
\end{equation}
One of the novelties of our work is the construction using triple points of new test functions in Section~\ref{newtest} satisfying conditions (1) and (2) of the Delsarte linear programming method. In fact, our functions are infinite linear combinations of coefficients of the matrices appearing in Theorem 3.2 of Bachoc--Vallentin~\cite{BV}. Bachoc--Vallentin use semi-definite programming to obtain an upper bound on kissing numbers $M(n,\frac{\pi}{3})$ by summing over triples of points in spherical codes. On the other hand, we average one of the three points over the sphere, and take the other two points from the spherical code. Semi-definite programming is computationally feasible in very low dimensions, and improves upon linear programming bounds~\cite{Viazlow}. After the writing of our paper, a semi-definite programming method using triple point correlations for sphere packings was developed by Cohn--de Laat--Salmon~\cite{CDS}, improving upon linear programming bounds on sphere packings in special low dimensions. In the semi-definite programming methods, the functions are numerically constructed. Furthermore, in high dimensions, there is no asymptotic bound using semi-definite programming which improves upon the linear programming bound of Kabatyanskii--Levenshtein~\cite{KL}, even up to a constant factor. The functions that we non-numerically construct improve upon~\cite{KL} by a constant factor. In the same spirit, we also improve upon sphere packing density upper bounds in high dimensions.\\
\\
Upper bounds on spherical codes are used to obtain upper bounds on sphere packing densities through inequalities proved using geometric methods. For example, for any $0<\theta\leq\pi/2$, Sidelnikov~\cite{nikov} proved using an elementary argument that
\begin{equation}\label{side}\delta_n\leq \sin^n(\theta/2)M(n+1,\theta).\end{equation}
Let $ 0< \theta< \theta^{\prime} \leq \pi$. We write $\lambda_n(\theta,\theta^{\prime})$ for the ratio of volume of the spherical cap with radius $\frac{\sin(\theta/2)}{\sin(\theta^{\prime}/2)}$ on the unit sphere $S^{n-1}$ to the volume of the whole sphere. Sidelnikov~\cite{nikov} used a similar argument to show that for $0<\theta<\theta'\leq\pi$
\begin{equation}\label{sid}M(n,\theta)\leq \frac{M(n+1,\theta')}{\lambda_n(\theta,\theta^{\prime})}.\end{equation}
Kabatyanskii--Levenshstein used the Delsarte linear programming method and Jacobi polynomials to give an upper bound on $M(n,\theta)$~\cite{KL}. A year later, Levenshtein~\cite{Leven79} found optimal polynomials up to a certain degree and improved the Kabatyanskii--Levenshtein bound by a constant factor. Levenshtein obtained the  upper bound  
\begin{equation}
\label{levinq}
M(n,\theta)\leq M_{\textup{Lev}}(n,\theta),
\end{equation} where 
\begin{equation}\label{defmlev}
M_{\textup{Lev}}(n,\theta):=\begin{cases}
2 {\binom{d+n-1}{n-1}} &\text{ if } t^{\alpha+1,\alpha}_{1,d}< \cos(\theta) \leq t^{\alpha+1,\alpha+1}_{1,d} ,
\\
\binom{d+n-1}{n-1}+\binom{d+n-2}{n-1}  &\text{ if } t^{\alpha+1,\alpha+1}_{1,d-1}< \cos(\theta) \leq t^{\alpha+1,\alpha}_{1,d}.
\end{cases}
\end{equation}
Here, $\alpha=\frac{n-3}{2}$ and $t^{\alpha,\beta}_{1,d}$ is the largest root of the Jacobi polynomial $p^{\alpha,\beta}_d$ of degree $d=d(n,\theta)$, a function of $n$ and $\theta$. We carefully define $d=d(n,\theta)$ and Levenshtein's optimal polynomials in Subsection~\ref{local}. Also see Subsection~\ref{local} for the definition and properties of Jacobi polynomials.\\
\\
Throughout this paper, $\theta^*= 62.997...^{\circ}$ is the unique root of the equation 
\[\cos \theta \log(\frac{1+\sin\theta}{1-\sin\theta})-(1+\cos\theta)\sin\theta=0\]
in $(0,\pi/2)$. As demonstrated by Kabatyanskii--Levenshtein in ~\cite{KL}, for $0<\theta< \theta^*$, the bound $M(n,\theta)\leq M_{\textup{Lev}}(n,\theta)$ is asymptotically exponentially weaker in $n$ than
\begin{equation}\label{sidimproved}M(n,\theta)\leq \frac{M_{\textup{Lev}}(n+1,\theta^*)}{\lambda_n(\theta,\theta^*)}\end{equation}
obtained from inequality~\eqref{sid}. Barg--Musin~\cite[p.11 (8)]{BargMusin}, based on the work \cite{Agrell} of Agrell--Vargy--Zeger, improved inequality~\eqref{sid} and showed that
\begin{equation}\label{Agrell}
M(n,\theta)\leq \frac{M(n-1,\theta')}{\lambda_n(\theta,\theta')}
\end{equation}
whenever $\pi>\theta'>2\arcsin\left(\frac{1}{2\cos(\theta/2)}\right)$. Cohn--Zhao \cite{CohnZhao} improved sphere packing density upper bounds by combining the upper bound of Kabatyanskii--Levenshtein on $M(n,\theta)$ with their analogue \cite[Proposition 2.1]{CohnZhao} of~\eqref{Agrell} stating that for $\pi/3\leq\theta\leq \pi$,
\begin{equation}\label{CohnZhaoineq}\delta_n\leq \sin^n(\theta/2)M(n,\theta),
\end{equation}
leading to better bounds than those obtained from \eqref{side}. Aside from our main results discussed in the next subsection, in Proposition~\ref{gmim} of Section~\ref{GM} we remove the angular restrictions on inequality~\eqref{Agrell} in the large $n$ regime by using a concentration of mass phenomenon. An analogous result removes the restriction $\theta\geq \pi/3$ on inequality~\eqref{CohnZhaoineq} for large $n$.\\
\\
Inequalities~\eqref{Agrell} and~\eqref{CohnZhaoineq} give, respectively, the bounds
\begin{equation}\label{Agrell2}
M(n,\theta)\leq \frac{M_{\textup{Lev}}(n-1,\theta^*)}{\lambda_n(\theta,\theta^*)},
\end{equation}
for $\theta$ and $\theta^*$ restricted as in the conditions for inequality~\eqref{Agrell}, and
\begin{equation}\label{CohnZhaoineq2}\delta_n\leq \sin^n(\theta^*/2)M_{\textup{Lev}}(n,\theta^*).
\end{equation}
Prior to our work, the above were the best bounds for large enough $n$. In Theorems~\ref{mainbound} and~\ref{spherepacking}, we improve both by a constant factor for large $n$, the first such improvement since the work of Levenshtein~\cite{Leven79} more than forty years ago. We also relax the angular condition in inequality~\eqref{Agrell2} to $0<\theta<\theta^*$ for large $n$. For $\theta^*\leq\theta\leq\pi$, $M(n,\theta)\leq M_{\textup{Lev}}(n,\theta)$ is still the best bound. We also prove a number of other results, including the construction of general test functions in Section~\ref{newtest} that are of independent interest.
\subsection{Main results and general strategy}
We improve inequalities~\eqref{Agrell2} and~\eqref{CohnZhaoineq2}   with an extra factor $0.4325$ for each sufficiently large $n$. In the case of sphere packings,  we obtain an improvement by a factor of $0.4325+\frac{51}{n}$ for dimensions $n\geq 2000$. In low dimensions, our geometric ideas combined with numerics lead to improvements that are better than $0.4325$. In Section~\ref{numerics}, we provide the results of our extensive numerical calculations. We now state our main theorems. 
\begin{theorem}\label{mainbound} Suppose that 
 $0<\theta<\theta^*$. Then 
\[
M(n,\theta)\leq c_n\frac{M_{\textup{Lev}}(n-1,\theta^*)}{\lambda_n(\theta,\theta^*)},
\]
where $c_n\leq 0.4325$ for large enough $n$ independent of $\theta.$
\end{theorem}

We also have a uniform version of this theorem  for sphere packing densities.
\begin{theorem}\label{spherepacking}Suppose that $\frac{1}{3}\leq \cos(\theta)\leq \frac{1}{2}$. We have 
\[\delta_{n} \leq  c_{n}(\theta)\sin^{n}(\theta/2)M_{\textup{Lev}}(n,\theta),
\]
where $c_{n}(\theta)<1$ for every $n$.
 If, additionally, $n\geq 2000$ we have $c_{n}(\theta)\leq 0.515+\frac{74}{n}$. 
 Furthermore, for $n\geq 2000$ we have $c_{n}(\theta^*)\leq 0.4325+\frac{51}{n}$.
\end{theorem}
By Kabatyanskii--Levenshtein~\cite{KL}, the best bound on sphere packing densities $\delta_n$ for large $n$ comes from $\theta=\theta^*$; comparisons using other angles are exponentially worse. Consequently, this theorem implies that we have an improvement by $0.4325$ for sphere packing density upper bounds in high dimensions. Furthermore, note that the constants of improvement $c_n(\theta)$ are bounded from above \textit{uniformly} in $\theta$. The lower bound in $\frac{1}{3}\leq \cos\theta\leq\frac{1}{2}$ is not conceptually significant in the sense that a change in $\frac{1}{3}$ would lead to a change in the bound $c_n(\theta)\leq 0.515+\frac{74}{n}$.\\
\\
We prove Theorem~\ref{spherepacking} by constructing a new test function that satisfies the Cohn--Elkies linear programming conditions. 
\begin{figure}[h!]
	\centering
		\includegraphics[width=50mm,scale=0.5]{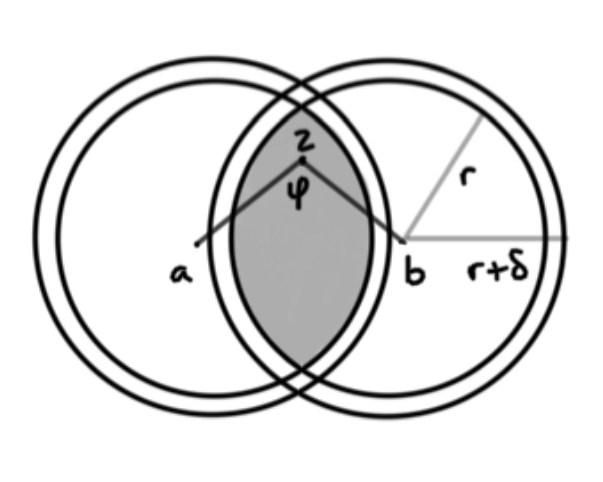}
	\label{fig:image1}
	\caption{Schematic diagram for sphere packings.}\label{img1}
\end{figure}
The geometric idea behind the construction of our new test functions is the following (see Figure~\ref{img1}). In~\cite{CohnZhao}, for every $\pi\geq\theta\geq\frac{\pi}{3}$, Cohn and Zhao choose a ball of radius $r=\frac{1}{2\sin(\theta/2)}$ around each point of the sphere packing so that for every two points $\boldsymbol{a}$ and $\boldsymbol{b}$ that are centers of balls in the sphere packing, for every point $\boldsymbol{z}$ in the shaded region, $\boldsymbol{a}$ and $\boldsymbol{b}$ make an angle $\varphi\geq\theta$ with respect to $\boldsymbol{z}$. By averaging a function satisfying the Delsarte linear programming conditions for the angle $\theta$, a new function satisfying the conditions of the Cohn--Elkies linear programming method is produced. The negativity condition easily follows as $\varphi\geq\theta$ for every point $\boldsymbol{z}$ in the shaded region. Our insight is that since we are taking an average, we do not need pointwise negativity for each point $\boldsymbol{z}$ to ensure the negativity condition for the new averaged function. In fact, we can enlarge the radii of the balls by a quantity $\delta=O(1/n)$ so that the conditions of the Cohn--Elkies linear programming method continue to hold for this averaged function. Since the condition $\varphi\geq \theta$ is no longer satisfied for every point $\boldsymbol{z}$, determining how large $\delta$ may be chosen is delicate. We develop analytic methods for determining such $\delta$. This requires us to estimate triple density functions in Section~\ref{tripleden}, and
estimating the Jacobi polynomials near their extreme roots in Subsection~\ref{local}. It is known that the latter problem is difficult~\cite[Conjecture 1]{Krasikov}.  In Subsection~\ref{local}, we treat  this difficulty  by using the relation between the zeros of  Jacobi polynomials; these ideas go back to the work of Stieltjes~\cite{Stieltjes}. More precisely, we use the underlying differential equations satisfied by Jacobi polynomials, and the fact that the roots of the family of Jacobi polynomials are interlacing. For Theorem~\ref{mainbound}, we use a similar idea but consider how much larger we can make certain appropriately chosen strips on a sphere. See Figure~\ref{img2}.\\
\\

\begin{table}[t!]
\begin{tabular}{c|l|l|l|l|l|l}
\hline
$n$ & \textup{Rogers} & \textup{Levenshtein79} & \textup{K.--L.} & \textup{Cohn--Zhao} & \textup{C.--Z.}+L79 &\textup{Our bound}\\
\hline
12 &\cellcolor{gray!10}$8.759\times 10^{-2}$ & $1.065\times 10^{-1}$ & $1.038\times 10^0$&$9.666\times 10^{-1}$&$3.253\times 10^{-1}$&$1.228\times 10^{-1}$\\
24 &\cellcolor{gray!10}$2.456\times 10^{-3}$ & $3.420\times 10^{-3}$ & $2.930\times 10^{-2}$&$2.637\times 10^{-2}$&$8.464\times 10^{-3}$&$3.194\times 10^{-3}$\\
36 &\cellcolor{gray!10}$5.527\times 10^{-5}$ & $8.109\times 10^{-5}$ & $5.547\times 10^{-4}$ &$4.951\times 10^{-4}$&$1.610\times 10^{-4}$&$6.035\times 10^{-5}$\\
48 & $1.128\times 10^{-6}$ & $1.643\times 10^{-6}$ & $8.745\times 10^{-6}$ &$7.649\times 10^{-6}$&$2.534\times 10^{-6}$&\cellcolor{gray!10}$9.487\times 10^{-7}$\\
60 & $2.173\times 10^{-8}$ & $3.009\times 10^{-8}$ &$1.223\times10^{-7}$&$1.046\times 10^{-7}$&$3.521\times 10^{-8}$&\cellcolor{gray!10}$1.317\times 10^{-8}$\\
72 & $4.039\times 10^{-10}$ & $5.135\times 10^{-10}$ &$1.550\times 10^{-9}$&$1.322\times 10^{-9}$& $4.496\times 10^{-10}$ &\cellcolor{gray!10}$1.678\times 10^{-10}$\\
84 & $7.315\times 10^{-12}$ & $8.312\times 10^{-12}$ &$1.850\times 10^{-11}$&$1.574\times 10^{-11}$&$5.381\times 10^{-12}$&\cellcolor{gray!10}$2.007\times 10^{-12}$\\
96 & $1.300\times 10^{-13}$ & $1.291\times 10^{-13}$ &$2.111\times 10^{-13}$&$1.786\times 10^{-13}$&$6.101\times 10^{-14}$&\cellcolor{gray!10}$2.273\times 10^{-14}$\\
108 & $2.277\times 10^{-15}$ & $1.937\times 10^{-15}$ &$2.320\times 10^{-15}$&$1.942\times 10^{-15}$&$6.662\times 10^{-16}$&\cellcolor{gray!10}$2.480\times 10^{-16}$\\
120 & $3.940\times 10^{-17}$ & $2.826\times 10^{-17}$ &$2.452\times 10^{-17}$&$2.051\times 10^{-17}$&$7.058\times 10^{-18}$&\cellcolor{gray!10}$2.626\times 10^{-18}$
\end{tabular}
\caption{Upper bounds on maximal sphere packing densities $\delta_n$ in $\mathbb{R}^n$ using functions that are not constructed using computer assistance. The last column is obtained using the method of this paper.}\end{table}

We use our geometrically constructed test functions to obtain the last column of Table 1. Table 1 is a comparison of upper bounds on sphere packing densities. This table does not include the \textit{computer-assisted} mathematically rigorous bounds obtained using the Cohn--Elkies linear programming method or those of semi-definite programming. We now describe the different columns. The \textit{Rogers} column corresponds to the bounds on sphere packing densities obtained by Rogers~\cite{Rogers}. The \textit{Levenshtein79} column corresponds to the bound obtained by Levenshtein in terms of roots of Bessel functions~\cite{Leven79}. The \textit{K.--L.} column corresponds to the bound on $M(n,\theta)$ proved by Kabatyanskii and Levenshtein~\cite{KL} combined with inequality~\eqref{sid}. The \textit{Cohn--Zhao} column corresponds to the column found in the work of Cohn and Zhao~\cite{CohnZhao}; they combined their inequality~\eqref{CohnZhaoineq} with the bound on $M(n,\theta)$ proved by Kabatyanskii--Levenshtein~\cite{KL}. We also include the column \textit{C.--Z.+L79} which corresponds to combining Cohn and Zhao's inequality with improved bounds on $M(n,\theta)$ using Levenshtein's optimal polynomials~\cite{Leven79}. The final column corresponds to the bounds on sphere packing densities obtained by our method. In Table 1, the highlighted entries are the best bounds obtained from these methods. Our bounds break most of the other bounds also in low dimensions. Our bounds are obtained from explicit geometrically constructed functions satisfying the Cohn--Elkies linear programming method. Our method only involves explicit integral calculations; in contrast to the numerical method in \cite{Elkies}, we do not rely on any searching algorithm. 
Moreover, compared to the Cohn--Elkies linear programming method, in $n=120$ dimensions, we improve upon the sphere packing density upper bound of $1.164\times 10^{-17}$ obtained by forcing eight double roots. 
\subsection{Structure of the paper}In Section~\ref{GM}, we setup some of the notation used in this paper and prove Proposition~\ref{gmim}. Section~\ref{newtest} concerns the general construction of our test functions that are used in conjunction with the Delsarte and Cohn--Elkies linear programming methods. In Section~\ref{Comparison}, we prove our main Theorems~\ref{mainbound} and~\ref{spherepacking}. In this section, we use our estimates on the triple density functions proved in Section~\ref{tripleden}. In Subsection~\ref{local}, we describe Jacobi polynomials, Levenshtein's optimal polynomials, and locally approximate Jacobi polynomials near their largest roots. In the final Section~\ref{numerics}, we provide a table of improvement factors.\\
\\
\textit{Acknowledgments.} Both authors are thankful to Alexander Barg, Peter Boyvalenkov, Henry Cohn, Matthew de Courcy-Ireland, Mehrdad Khani Shirkoohi, Peter Sarnak, and Hamid Zargar. N.T.Sardari's work is supported partially by the National Science Foundation under Grant No. DMS-2015305 N.T.S and is grateful to the Institute for Advanced Study and the Max Planck Institute for Mathematics in Bonn for their hospitality and financial support. M.Zargar was supported by the Max Planck Institute for Mathematics in Bonn, SFB1085 at the University of Regensburg, and the University of Southern California.

\section{Geometric improvement}\label{GM}
In this section, we prove Proposition~\ref{gmim}, improving inequality~\eqref{Agrell} by removing the restrictions on the angles for large dimensions $n$. This is achieved via a concentration of mass phenomenon, at the expense of an exponentially decaying error term. First, we introduce some notations that we use throughout this paper.\\
\\
Let $0<\theta<\theta'<\pi$ be given angles, and let $s:=\cos\theta$ and $s':=\cos\theta'$. Throughout, $S^{n-1}$ is the \textit{unit} sphere centered at the origin of $\mathbb{R}^n$. Suppose $\boldsymbol{z}\in S^{n-1}$ is a fixed point.\\
\\
Consider the hyperplane $N_{\boldsymbol{z}}:=\{\boldsymbol{w}\in\mathbb{R}^n:\left<\boldsymbol{w},\boldsymbol{z}\right>=0\}$. For each $\boldsymbol{a},\boldsymbol{b}\in S^{n-1}$ of radial angle at least $\theta$ from each other, we may orthogonally project them onto $N_{\boldsymbol{z}}$ via the map $\Pi_{\boldsymbol{z}}:S^{n-1}\setminus\{\pm\boldsymbol{z}\}\rightarrow N_{\boldsymbol{z}}$. For every $\boldsymbol{a}\in S^{n-1}$,
\[\Pi_{\boldsymbol{z}}(\boldsymbol{a})=\boldsymbol{a}-\left<\boldsymbol{a},\boldsymbol{z}\right>\boldsymbol{z}.\]
For brevity, when $\boldsymbol{a}\neq\pm\boldsymbol{z}$ we denote $\frac{\Pi_{\boldsymbol{z}}(\boldsymbol{a})}{|\Pi_{\boldsymbol{z}}(\boldsymbol{a})|}$ by $\tilde{\boldsymbol{a}}$ lying inside the unit sphere $S^{n-2}$ in $N_{\boldsymbol{z}}$ centered at the origin. Given $\boldsymbol{a},\boldsymbol{b}\in S^{n-1}\setminus\{\pm\boldsymbol{z}\}$, we obtain points $\tilde{\boldsymbol{a}},\tilde{\boldsymbol{b}}\in S^{n-2}$. We will use the following notation.
\[u:=\left<\boldsymbol{a},\boldsymbol{z}\right>,\]
\[v:=\left<\boldsymbol{b},\boldsymbol{z}\right>,\]
and
\[t:=\left<\boldsymbol{a},\boldsymbol{b}\right>.\]
It is easy to see that
\begin{equation}\label{xlab}
\left<\tilde{\boldsymbol{a}},\tilde{\boldsymbol{b}}\right>=\frac{t-uv}{\sqrt{(1-u^2)(1-v^2)}}.
\end{equation}
\begin{figure}[h!]
	\centering
		\includegraphics[width=60mm,scale=0.5]{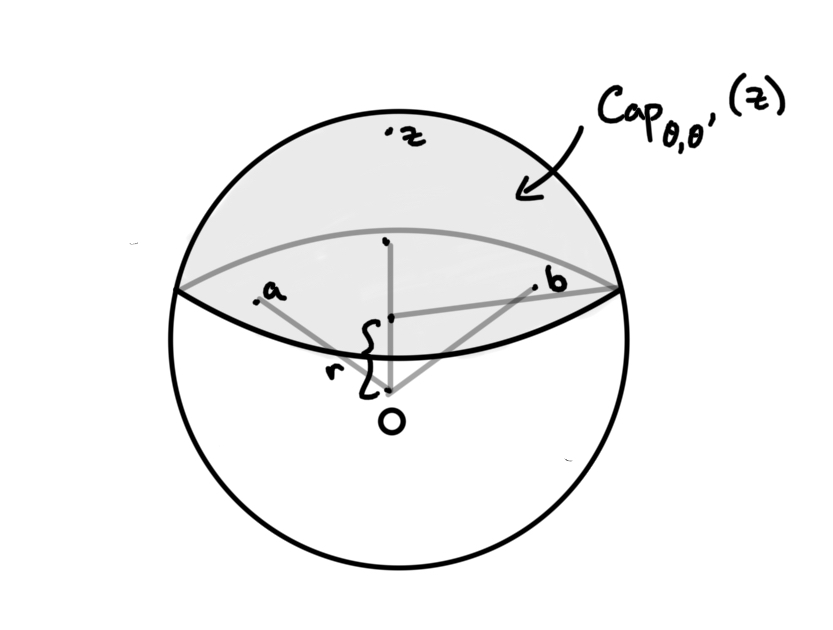}
	\label{fig:img1}
	\caption{Spherical cap $\textup{Cap}_{\theta,\theta'}(\boldsymbol{z})$ containing points $\boldsymbol{a},\boldsymbol{b}$.}\label{img4}
\end{figure}
Consider the cap $\textup{Cap}_{\theta,\theta'}(\boldsymbol{z})$ on $S^{n-1}$ centered at $\boldsymbol{z}$ and of radius $\frac{\sin(\theta/2)}{\sin(\theta'/2)}$. $\textup{Cap}_{\theta,\theta'}(\boldsymbol{z})$ is the spherical cap centered at $\boldsymbol{z}$ with the defining property that any two points $\boldsymbol{a}$, $\boldsymbol{b}$ on its boundary of radial angle $\theta$ are sent to points $\tilde{\boldsymbol{a}}$, $\tilde{\boldsymbol{b}}\in S^{n-2}$ having radial angle $\theta'$.\\
\\
It will be convenient for us to write the radius of the cap as $\sqrt{1-r^2}=\frac{\sin(\theta/2)}{\sin(\theta'/2)}$, from which it follows that
\begin{equation}\label{littler}r=\sqrt{\frac{s-s'}{1-s'}}.
\end{equation}
This $r$ is the distance from the center of the cross-section defining the cap $\textup{Cap}_{\theta,\theta'}(\boldsymbol{z})$ to the center of $S^{n-1}$. In fact, 
\[\textup{Cap}_{\theta,\theta'}(\boldsymbol{z})=\{\boldsymbol{a}\in S^{n-1}:\left<\boldsymbol{a},\boldsymbol{z}\right>\geq r\}.\]\\
\\
For $0<\theta<\theta'<\pi$, we define
\begin{equation}\label{cosgamma}\gamma_{\theta,\theta'}:=2\arctan\frac{s}{\sqrt{(1-s)(s-s')}}+\arccos(r)-\pi,
\end{equation}
and 
\begin{equation}\label{hugeR}
R:=\cos(\gamma_{\theta,\theta'}).
\end{equation}
Then $R>r$, and we define the strip
\[\textup{Str}_{\theta,\theta'}(\boldsymbol{z}):=\left\{\boldsymbol{a}\in S^{n-1}:r\leq\left<\boldsymbol{a},\boldsymbol{z}\right>\leq R\right\}.\]
\begin{figure}[h!]
	\centering
		\includegraphics[width=70mm,scale=0.5]{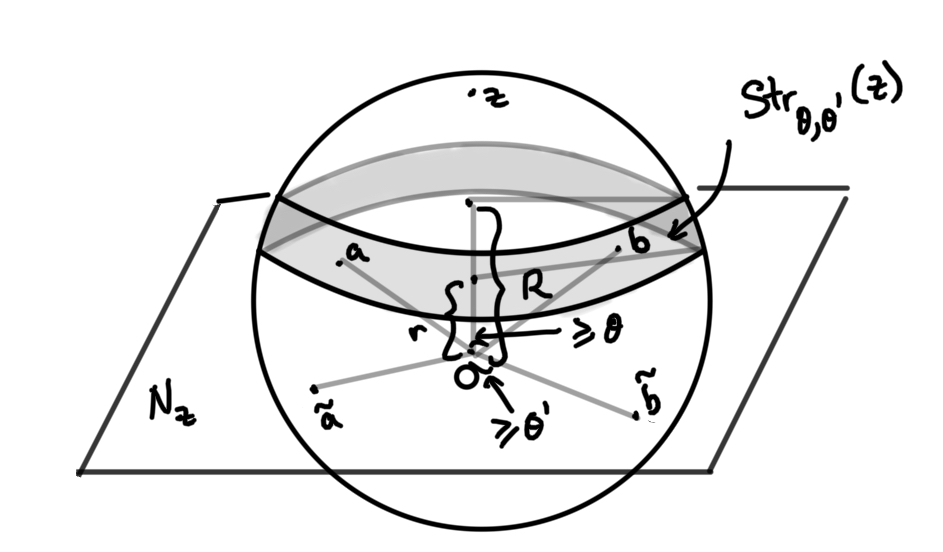}
	\label{fig:image2}
	\caption{Spherical strip $\textup{Str}_{\theta,\theta'}(\boldsymbol{z})$.}\label{img2}
\end{figure}
\begin{lemma}\label{anglelemma}
The strip $\textup{Str}_{\theta,\theta'}(\boldsymbol{z})$ is contained in $\textup{Cap}_{\theta,\theta'}(\boldsymbol{z})$. Furthermore, any two points in $\textup{Str}_{\theta,\theta'}(\boldsymbol{z})\setminus\{\pm\boldsymbol{z}\}$ of radial angle at least $\theta$ apart are mapped to points in $S^{n-2}$ (unit sphere in $N_{\boldsymbol{z}}$) of radial angle at least $\theta'$.
\end{lemma}
\begin{proof}
The first statement follows from $\left<\boldsymbol{a},\boldsymbol{z}\right>\geq r$ for any point $\boldsymbol{a}\in\textup{Str}_{\theta,\theta'}(\boldsymbol{z})$. The second statement follows from equation (147) and Lemma 42 of~\cite{Agrell}.
\end{proof}
With this in mind, we are now ready to prove Proposition~\ref{gmim}. When discussing the measure of strips $\textup{Str}_{\theta,\theta'}(\boldsymbol{z})$, we drop $\boldsymbol{z}$ from the notation and simply write $\textup{Str}_{\theta,\theta'}$.
\begin{proposition}\label{gmim} Let $0<\theta<\theta^{\prime}<\pi.$ We have 
\begin{equation}\label{geointerpolation}M(n,\theta)\leq \frac{M(n-1,\theta')}{\lambda_n(\theta,\theta^{\prime})} (1+O(ne^{-nc})),\end{equation}
where $c:=\frac{1}{2}\log\left(\frac{1-r^2}{1-R^2}\right)>0$ is independent of $n$ and only depends on $\theta$ and $\theta'$.
\end{proposition}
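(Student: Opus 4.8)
The plan is to argue geometrically, in the spirit of Sidelnikov and of Barg--Musin: from a maximum $\theta$-code $A$ on $S^{n-1}$ we will produce a $\theta'$-code on a great subsphere $S^{n-2}$ of size at least $M(n,\theta)\,\mu_n(\theta,\theta')\,(1-O(ne^{-nc}))$, by radially projecting those code points that lie in a well-chosen spherical \emph{annulus} rather than in a whole cap. The extra annular restriction, compared with Barg--Musin's cap, is exactly what lets us drop their angular hypothesis, and the exponentially small loss is the measure of the discarded polar subcap.

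For $e\in S^{n-1}$ let $\pi_e(x)=\dfrac{x-\langle x,e\rangle e}{\sqrt{1-\langle x,e\rangle^2}}\in e^{\perp}\cap S^{n-1}\cong S^{n-2}$, so that for unit vectors $u,v$ with $a:=\langle u,e\rangle$, $b:=\langle v,e\rangle$ one has $\langle \pi_e(u),\pi_e(v)\rangle=\dfrac{\langle u,v\rangle-ab}{\sqrt{(1-a^2)(1-b^2)}}$, an increasing function of $\langle u,v\rangle$. Writing $\nu_n$ for the normalized measure of $\{y\in S^{n-1}:\langle y,e\rangle> R\}$, and averaging over $e$ uniform on $S^{n-1}$ (using that $\{y:\langle y,e\rangle\ge r\}$ has normalized measure exactly $\mu_n(\theta,\theta')$), I get $\mathbb{E}_e\,\#\{x\in A: r\le\langle x,e\rangle\le R\}=|A|\,(\mu_n(\theta,\theta')-\nu_n)$. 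So one may fix $e$ for which the annular set $B:=\{x\in A: r\le\langle x,e\rangle\le R\}$ has $|B|\ge M(n,\theta)(\mu_n(\theta,\theta')-\nu_n)$.

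The crux is to show that $\pi_e(B)$ is a $\theta'$-code. For distinct $u,v\in B$ we have $\langle u,v\rangle\le s$, and by the formula and monotonicity above it suffices to show $F(a,b):=\dfrac{s-ab}{\sqrt{(1-a^2)(1-b^2)}}\le s'$ for all $a,b\in[r,R]$: when $\langle u,v\rangle=s$ is not geometrically attainable for the given $a,b$, the true value of $\langle u,v\rangle$ is only smaller so the bound still applies, and the degenerate possibility $F(a,b)\ge 1$ (which would force $u,v$ radially aligned over $e$) is excluded because in the relevant range $\arccos r-\arccos R<\theta$. Now $F$ has no critical point in the open square $(r,R)^2$, since $\partial_aF=\partial_bF=0$ forces $s^2=1$; a short edge analysis (using $\partial_bF\propto bs-a$) then gives $\max_{[r,R]^2}F=\max\{F(r,r),F(r,R)\}$. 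By definition of $r$ one has $s=r^2+s'(1-r^2)$, i.e.\ $F(r,r)=s'$, and the definition of $R$ is arranged precisely so that $F(r,R)=s'$ as well: writing $r=\cos\phi$ and $R=\cos\gamma$, the identity $F(r,R)=s'$ is $\cos\theta=\cos\phi\cos\gamma+\cos\theta'\sin\phi\sin\gamma$, the spherical law of cosines for a triangle with sides $\phi,\gamma$ and included angle $\theta'$; since $\ell=\phi$ also solves $\cos\theta=\cos\phi\cos\ell+\cos\theta'\sin\phi\sin\ell$ and the two solutions in $\ell$ sum to $2\arctan(\cos\theta'\tan\phi)$, the other solution is $\gamma=2\arctan(\cos\theta'\tan\phi)-\phi$, which an elementary arctangent identity turns into $\gamma_{\theta,\theta'}=2\arctan\frac{s}{\sqrt{(1-s)(s-s')}}+\arccos r-\pi$; in the range where the bound is non-vacuous $0<\gamma<\phi$, hence $R>r$ and $c>0$. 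This establishes $\max_{[r,R]^2}F=s'$ and hence the claim, so $\pi_e$ is injective on $B$ and $M(n-1,\theta')\ge|B|\ge M(n,\theta)(\mu_n(\theta,\theta')-\nu_n)$. Standard Laplace-type estimates for measures of spherical caps give $\nu_n/\mu_n(\theta,\theta')=O\big((\tfrac{1-R^2}{1-r^2})^{(n-1)/2}\big)=O(ne^{-nc})$, and rearranging yields $M(n,\theta)\le\frac{M(n-1,\theta')}{\mu_n(\theta,\theta')}(1+O(ne^{-nc}))$.

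The main obstacle is the geometric lemma, and within it the determination of the extremal pair of annulus points: one must see that among the competing configurations — both points at the outer radius $\langle\cdot,e\rangle=r$, versus one at the outer radius and one at the inner radius $\langle\cdot,e\rangle=R$, versus radially aligned pairs — it is the second that is extremal for the projected inner product, and that the radius $R$ at which it becomes critical is exactly $\cos\gamma_{\theta,\theta'}$. Once that is pinned down, the passage to the closed form for $\gamma$, the verification that $c>0$ in the relevant range, and the cap-measure estimate for $\nu_n$ are all routine.
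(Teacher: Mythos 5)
Your proposal is correct and follows essentially the same approach as the paper: identify a favorable spherical annulus (the paper's strip $\textup{Str}_{\theta,\theta'}$) by an averaging argument, project the code points lying in it to a $\theta'$-code on $S^{n-2}$, and bound the relative deficit of the annulus against the full cap by a Laplace-type estimate. The one place you deviate is that you prove the key geometric inequality $F(a,b)=\frac{s-ab}{\sqrt{(1-a^2)(1-b^2)}}\le s'$ on $[r,R]^2$ from scratch (by locating the extremum on the boundary and invoking the spherical law of cosines, which also yields the closed form of $\gamma_{\theta,\theta'}$), whereas the paper outsources the extremal configuration to Lemma~42 of Agrell--Vardy--Zeger and simply states the formula for $\gamma_{\theta,\theta'}$.
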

\begin{proof}Suppose $\{x_1,\hdots,x_N\}\subset S^{n-1}$ is a maximal $\theta$-spherical code. Given $x\in S^{n-1}$, let $m(x)$ be the number of such strips $\textup{Str}_{\theta,\theta'}(x_i)$ such that $x\in\textup{Str}_{\theta,\theta'}(x_i)$. Note that $x\in\textup{Str}_{\theta,\theta'}(x_i)$ if and only if $x_i\in\textup{Str}_{\theta,\theta'}(x)$. Therefore, the strip $\textup{Str}_{\theta,\theta'}(x)$ contains $m(x)$ points of $\{x_1,\hdots,x_N\}$. From the previous lemma, we know that these $m(x)$ points are mapped to points in $S^{n-2}$ that have pairwise radial angles at least $\theta'$. As a result,
\[m(x)\leq M(n-1,\theta'),\]
using which we obtain
\[N\cdot\mu(\textup{Str}_{\theta,\theta'})=\sum_{i=1}^N\int_{\textup{Str}_{\theta,\theta'}(x_i)}d\mu(x)=\int_{S^{n-1}}m(x)d\mu(x)\leq M(n-1,\theta')\int_{S^{n-1}}d\mu(x)=M(n-1,\theta'),\]
where $\mu$ is the uniform probability measure on $S^{n-1}$. Hence,
\begin{equation}\label{stareq}M(n,\theta)\leq\frac{M(n-1,\theta')}{\mu(\textup{Str}_{\theta,\theta'})}.\end{equation}
Note that the masses of $\textup{Str}_{\theta,\theta'}$ and the cap $\textup{Cap}_{\theta,\theta'}$ have the property that
\begin{eqnarray*}1-\frac{\mu(\textup{Str}_{\theta,\theta'})}{\lambda_n(\theta,\theta')}&=&\frac{1}{\omega_n\lambda_n(\theta,\theta')}\int_{R}^1(1-t^2)^{\frac{n-3}{2}}dt\\
&\leq& \frac{(1-R^2)^{\frac{n-3}{2}}}{\omega_n\lambda_n(\theta,\theta')}.
\end{eqnarray*}
Here, $\omega_n=\int_{-1}^1(1-t^2)^{\frac{n-3}{2}}dt$. On the other hand, we may also give a lower bound on $\lambda_n(\theta,\theta')$ by noting that
\begin{eqnarray*}
\lambda_n(\theta,\theta')&=&\frac{1}{\omega_n}\int_r^1(1-t^2)^{\frac{n-3}{2}}dt\\
&\geq &\frac{\left(\sqrt{\frac{1+r}{1-r}}\right)^{n-3}}{\omega_n}\int_r^1(1-t)^{n-3}dt\\&=&
\frac{\left(\sqrt{\frac{1+r}{1-r}}\right)^{n-3}(1-r)^{n-2}}{(n-2)\omega_n}.
\end{eqnarray*}
The first inequality follows from 
\[\frac{1+t}{1-t}\geq \frac{1+r}{1-r}\]
for $t\in[r,1)$. Combining this inequality for $\lambda_n(\theta,\theta')$ with the above, we obtain
\[1\geq \frac{\mu(\textup{Str}_{\theta,\theta'})}{\lambda_n(\theta,\theta')}\geq 1- \frac{(n-2)(1-R^2)^{\frac{n-3}{2}}}{(1-r)(1-r^2)^{\frac{n-3}{2}}}=1-\frac{(n-2)}{(1-r)}e^{-\frac{n-3}{2}\log\left(\frac{1-r^2}{1-R^2}\right)}.\]
The conclusion follows using~\eqref{stareq}.
\end{proof}
\begin{remark}\label{CZC}
Proposition~\ref{gmim} removes the angular condition on inequality~\eqref{Agrell} of Barg--Musin at the expense of an error term that is exponentially decaying in the dimension $n$. 

\end{remark}
\section{New test functions}\label{newtest}
In this section, we prove general linear programming bounds on the sizes of spherical codes and sphere packing densities by constructing new test functions. 
\subsection{Spherical codes}
Recall the definition of $D(\mu,s)$ from the discussion of the Delsarte linear programming method in the introduction. In this subsection, we construct a function inside $D(d\mu_{\frac{n-3}{2}},\cos\theta)$  from  a given one inside $D(d\mu_{\frac{n-4}{2}},\cos\theta^{\prime}),$  where $\theta'>\theta.$ \\
\\
Suppose that  $g_{\theta'}\in  D(d\mu_{\frac{n-4}{2}},\cos\theta^{\prime}).$ Fix $\vec{z}\in S^{n-1}$.   Given $\boldsymbol{a},\boldsymbol{b} \in S^{n-1}$, we define 
\begin{equation}\label{relat}
h(\boldsymbol{a},\boldsymbol{b};\vec{z}):= F(\left<\boldsymbol{a},\vec{z}\right>) F(\left<\boldsymbol{b},\vec{z}\right>  ) g_{\theta'}\left(\left<\tilde{\boldsymbol{a}},\tilde{\boldsymbol{b}}\right>\right),
\end{equation}
where $F$ is an arbitrary integrable real valued function on $[-1,1]$, and $\tilde{\boldsymbol{a}}$ and $\tilde{\boldsymbol{b}}$ are unit vectors on the tangent space of the sphere at $\vec{z}$ as defined in the previous section. Using the notation $u,v,t$ of Section~\ref{GM},
\begin{equation}\label{hpair}
h(\boldsymbol{a},\boldsymbol{b};\vec{z}):= F(u)F(v) g_{\theta'}\left(\frac{t-uv}{\sqrt{(1-u^2)(1-v^2)}}\right).
\end{equation}
Note that the projection is not defined at $\pm\boldsymbol{z}$. If either $\boldsymbol{a}$ or $\boldsymbol{b}$ is $\pm\boldsymbol{z}$, we let $h(\boldsymbol{a},\boldsymbol{b};\vec{z})=0$.
\begin{lemma}\label{51}
 $h(\boldsymbol{a},\boldsymbol{b};\vec{z})$ is a positive semi-definite function in the variables $\boldsymbol{a},\boldsymbol{b}$ on $S^{n-1},$ namely
 \[
 \sum_{x_i,x_j\in A} a_i a_j h(\boldsymbol{a}_i,\boldsymbol{a}_j;\vec{z})\geq 0
 \]
 for every finite subset $A\subset S^{n-1}$, and  coefficients $a_i \in \mathbb{R}.$ Moreover, $h$ is invariant under the diagonal action of the orthogonal group $O(n)$, namely 
 \[
 h(\boldsymbol{a},\boldsymbol{b};\vec{z})=h(k\boldsymbol{a},k\boldsymbol{b};k\vec{z})
 \]
 for every $k\in O(n)$.
\end{lemma}
\begin{proof}
By~\eqref{relat}, we have 
 \[
 \sum_{x_i,x_j\in A} a_i a_j h(\boldsymbol{a}_i,\boldsymbol{a}_j;\vec{z})= \sum_{{x_i,x_j\in A} } a_iF(\left<\vec{x_i},\vec{z}\right>) a_jF(\left<\vec{x_j},\vec{z}\right>  ) g_{\theta'}(\left<\tilde{\vec{x_i}},\tilde{\vec{x_j}}\right>).
 \]
 By~\eqref{positivedef} and the positivity of the Fourier coefficients of $g_{\theta'}$, we have 
 \[
 \sum_{{x_i,x_j\in A} } a_iF(\left<\vec{x_i},\vec{z}\right>) a_jF(\left<\vec{x_j},\vec{z}\right>  ) g_{\theta'}(\left<\tilde{\vec{x_i}},\tilde{\vec{x_j}}\right>) \geq 0.
 \]
 This proves the first part, and the second part follows from~\eqref{relat} and the $O(n)$-invariance  of the  inner product. 
\end{proof}
Let 
\begin{equation}\label{relative}
 h(\boldsymbol{a},\boldsymbol{b}):=\int_{O(n)}  h(\boldsymbol{a},\boldsymbol{b};k\vec{z}) d\mu(k),
\end{equation}
where $d\mu(k)$ is the probability Haar measure on $O(n).$
\begin{lemma}\label{pointpair}
$ h(\boldsymbol{a},\boldsymbol{b})$ is a positive semi-definite  point pair invariant function on $S^{n-1}.$
\end{lemma}
\begin{proof}
This follows from the previous lemma. 
\end{proof}
Since $h(\boldsymbol{a},\boldsymbol{b})$ is a point pair invariant function, it only depends on $t=\langle \boldsymbol{a},\boldsymbol{b}\rangle.$ For the rest of this paper, we abuse notation and consider $h$ as a real valued function of $t$ on $[-1,1],$ and write  $h(\boldsymbol{a},\boldsymbol{b})=h(t).$
\subsubsection{Computing $\cal{L}(h)$}
See~\eqref{Ldef} for the definition of $\cal{L}$. We proceed to computing the value of $\cal{L}(h)$ in terms of $F$ and $g_{\theta'}.$
 First, we compute the value of $h(1).$ Let 
  $ \| F \|^2_2:=\int_{-1}^{1} F(u)^2  d\mu_{\frac{n-3}{2}}(u).$
\begin{lemma}\label{h(1)}
We have 
\[
h(1)=g_{\theta'}(1) \| F \|^2_2.
\]
\end{lemma}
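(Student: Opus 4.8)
The plan is to evaluate $h$ on the diagonal directly from its definition. Since $h(\vec x,\vec y)$ depends only on $t=\langle \vec x,\vec y\rangle$, the number $h(1)$ equals $h(\vec x,\vec x)$ for any fixed unit vector $\vec x\in S^{n-1}$. First I would note that when $\vec x=\vec y$ we have $u=v=\langle \vec x,\vec z\rangle$ and $t=1$, so the argument of $g_{\theta'}$ in the definition~\eqref{relat} becomes
\[
\frac{t-uv}{\sqrt{(1-u^2)(1-v^2)}}=\frac{1-u^2}{1-u^2}=1
\]
for every $u\in(-1,1)$; the excluded values $u=\pm1$ form a null set for the averaging below and can be ignored. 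Hence $h(\vec x,\vec x;\vec z)=g_{\theta'}(1)\,F(\langle \vec x,\vec z\rangle)^2$.

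Next I would substitute this into the definition~\eqref{relative} of the $O(n)$-average:
\[
h(1)=\int_{O(n)} g_{\theta'}(1)\,F(\langle \vec x,k\vec z\rangle)^2\,d\mu(k)=g_{\theta'}(1)\int_{O(n)}F(\langle \vec x,k\vec z\rangle)^2\,d\mu(k).
\]
The remaining step is the standard fact that, under the normalized Haar measure $d\mu(k)$ on $O(n)$ with $\vec z$ fixed, the point $k\vec z$ is equidistributed on $S^{n-1}$ according to the uniform probability measure $d\sigma$ (the pushforward of Haar measure is $O(n)$-invariant, hence equal to the unique invariant probability measure), and that the pushforward of $d\sigma$ under $\vec w\mapsto\langle \vec x,\vec w\rangle$ is exactly $d\mu_{\frac{n-3}{2}}$ on $[-1,1]$ — the usual slicing of the sphere, which is precisely why $d\mu_{\frac{n-3}{2}}$ is the relevant measure throughout the paper. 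Therefore
\[
\int_{O(n)}F(\langle \vec x,k\vec z\rangle)^2\,d\mu(k)=\int_{S^{n-1}}F(\langle \vec x,\vec w\rangle)^2\,d\sigma(\vec w)=\int_{-1}^{1}F(u)^2\,d\mu_{\frac{n-3}{2}}(u)=\|F\|_2^2,
\]
which gives $h(1)=g_{\theta'}(1)\|F\|_2^2$, as claimed.

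There is essentially no serious obstacle here: the only points requiring a word of care are that the argument of $g_{\theta'}$ is genuinely the constant $1$ on the diagonal (so no limiting issue at $u=\pm1$ intervenes, these values forming a null set), and the bookkeeping that the uniform probability measure on $S^{n-1}$ projects onto exactly the probability measure $d\mu_{\frac{n-3}{2}}$ entering the definition of $\|F\|_2$. Both are routine.
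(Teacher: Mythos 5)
Your proof is correct and follows essentially the same approach as the paper: set $\vec{x}=\vec{y}$, observe that the argument of $g_{\theta'}$ collapses to $1$, and reduce the $O(n)$-average to the integral $\int_{-1}^1 F(u)^2\,d\mu_{\frac{n-3}{2}}(u)$. You are merely more explicit than the paper about the pushforward of the Haar measure and the harmless null set at $u=\pm1$, both of which the paper leaves implicit.
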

\begin{proof}
Indeed, by definition, $h(1)$ corresponds to taking $\boldsymbol{a}=\boldsymbol{b}$, from which it follows that $t=1$, $u=v$ and $\frac{t-uv}{\sqrt{(1-u^2)(1-v^2)}}=1$. Therefore, we obtain

\[
h(1)=  \int_{-1}^{1} F(u)^2 g_{\theta'}(1) d\mu_{\frac{n-3}{2}}(u)=g_{\theta'}(1) \| F \|^2_2.
\]
\end{proof}
Next, we compute the zero Fourier coefficient of $h,$ that is, $h_0:=\int_{-1}^{1} h(t)  d\mu_{\frac{n-3}{2}}(t)$. Let  $F_0=\int_{-1}^{1} F(u)  d\mu_{\frac{n-3}{2}}(u)  $ and $g_{\theta',0}=\int_{-1}^{1} g_{\theta'}  d\mu_{\frac{n-4}{2}}(u)$.
\begin{lemma}\label{h_0}
We have
\[
h_0=g_{\theta',0}F_0^2.
\]
\end{lemma}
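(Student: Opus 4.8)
The plan is to compute the zeroth Fourier coefficient $h_0 = \int_{-1}^1 h(t)\, d\mu_{\frac{n-3}{2}}(t)$ directly from the definitions of $h$ in~\eqref{relat} and~\eqref{relative}, exploiting the $O(n)$-invariance already recorded in the lemmas above. Since $h(\vec{x},\vec{y})$ is a point-pair invariant function on $S^{n-1}$, its zeroth Fourier coefficient equals its average over all pairs $(\vec{x},\vec{y})\in S^{n-1}\times S^{n-1}$ with respect to the product of the uniform probability measures; that is, $h_0 = \int_{S^{n-1}}\int_{S^{n-1}} h(\vec{x},\vec{y})\, d\sigma(\vec{x})\, d\sigma(\vec{y})$, where $\sigma$ is the uniform probability measure on $S^{n-1}$. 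Using the definition~\eqref{relative} and Fubini, this is $\int_{O(n)}\int_{S^{n-1}}\int_{S^{n-1}} h(\vec{x},\vec{y};k\vec{z})\, d\sigma(\vec{x})\, d\sigma(\vec{y})\, d\mu(k)$; by rotation-invariance of $\sigma$ the inner double integral does not depend on $k\vec{z}$, so it suffices to evaluate $I := \int_{S^{n-1}}\int_{S^{n-1}} F(u)F(v)\, g_{\theta'}\!\left(\frac{t-uv}{\sqrt{(1-u^2)(1-v^2)}}\right) d\sigma(\vec{x})\, d\sigma(\vec{y})$ with $\vec{z}$ fixed, $u=\langle \vec{x},\vec{z}\rangle$, $v=\langle\vec{y},\vec{z}\rangle$, $t=\langle\vec{x},\vec{y}\rangle$.

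The main step is to disintegrate this integral along the fibration of $S^{n-1}$ over the $\vec{z}$-coordinate. First I would fix $\vec{x}$ and integrate over $\vec{y}$: writing $\vec{y}$ in terms of its component along $\vec{z}$ (which is $v$, distributed with density proportional to $(1-v^2)^{\frac{n-3}{2}}$) and its component along $\tilde{\vec{x}}$ together with the orthogonal directions, the key observation is that, conditioned on $v$, the variable $\langle \tilde{\vec{x}},\tilde{\vec{y}}\rangle = \frac{t-uv}{\sqrt{(1-u^2)(1-v^2)}}$ is itself distributed on $[-1,1]$ with density proportional to $(1-w^2)^{\frac{n-4}{2}}$ — this is the standard fact that the inner product of a fixed unit vector with a uniform unit vector on $S^{n-2}$ (the tangent sphere at $\vec{z}$, of dimension $n-2$) has this Beta-type law, i.e. exactly the measure $d\mu_{\frac{n-4}{2}}$. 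Hence $\int_{S^{n-1}} g_{\theta'}(\langle\tilde{\vec{x}},\tilde{\vec{y}}\rangle)\, d\sigma(\vec{y}) = \int_{-1}^1 \left(\int_{-1}^1 g_{\theta'}(w)\, d\mu_{\frac{n-4}{2}}(w)\right) F(v)\cdots$ — more precisely, after also inserting the $F(v)$ factor and integrating $v$ against $d\mu_{\frac{n-3}{2}}(v)$, the $w$-integral factors out as $g_{\theta',0}$ and the $v$-integral gives $F_0$. Integrating the remaining $F(u)$ factor over $\vec{x}$ against $d\mu_{\frac{n-3}{2}}(u)$ produces the second factor $F_0$, yielding $I = g_{\theta',0} F_0^2$, and therefore $h_0 = g_{\theta',0} F_0^2$ as claimed.

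The step I expect to be the main obstacle — or at least the one requiring the most care — is the disintegration showing that $\langle\tilde{\vec{x}},\tilde{\vec{y}}\rangle$ has law $d\mu_{\frac{n-4}{2}}$ independently of the value of $v$ (and of $u$). The cleanest way to justify this is to use the $O(n)$-invariance: fix $\vec{z}$ and $\vec{x}$, decompose $\mathbb{R}^n = \mathbb{R}\vec{z} \oplus T_{\vec{z}}S^{n-1}$, and observe that the stabilizer of $\vec{z}$ (a copy of $O(n-1)$) acts transitively on each latitude sphere $\{v = \text{const}\}$ while fixing $u$; averaging over this subgroup reduces $\tilde{\vec{y}}$ to a uniform point on the unit sphere of $T_{\vec{z}}S^{n-1}\cong \mathbb{R}^{n-1}$, against which the fixed unit vector $\tilde{\vec{x}}$ has inner product distributed as $d\mu_{\frac{n-4}{2}}$. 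One should check that $g_{\theta'}$ being integrable is enough for all the Fubini interchanges, which it is since everything in sight is a bounded-by-integrable product on a probability space.
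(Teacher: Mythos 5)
Your proof is correct and follows essentially the same route as the paper's: both pass to $h_0 = \int\int h(\vec{x},\vec{y};\vec{z})\,d\sigma(\vec{x})\,d\sigma(\vec{y})$ with $\vec{z}$ fixed, and both factor the integral by averaging $g_{\theta'}$ over the fiber directions (the tangent sphere at $\vec{z}$) and $F$ over the heights $u,v$; the paper phrases the fiber average via the coset decomposition $O(n)/O(n-1)\cong S^{n-1}$, whereas you phrase the identical fact as a conditional-distribution statement that $\langle\tilde{\vec{x}},\tilde{\vec{y}}\rangle$ has law $d\mu_{\frac{n-4}{2}}$ given $u$ and $v$. These are the same computation in two dialects, and your justification of the key disintegration step via the stabilizer $O(n-1)$ is exactly what makes the paper's measure decomposition valid.
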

\begin{proof}
Let $O(n-1)\subset O(n)$ be the stabilizer  of $\vec{z}.$ We identify $O(n)/O(n-1)$ with $S^{n-1}$ and write $[k_1]:= k_1\vec{z}\in S^{n-1}$ for any $k_1\in O(n).$
Then we write the probability Haar measure of $O(n)$ as the product of the probability Haar measure of $O(n-1)$ and the uniform probability measure $d\sigma$ of $S^{n-1}:$
\[
 d\mu(k_1)=d\mu(k_1')d\sigma([k_1]),
\]
where $k_1'\in O(n-1).$
By equations~\eqref{relat}, ~\eqref{relative}  and the above, we obtain
\[
\begin{split}
h_0&=\iint_{k_i'\in O(n-1)}  \iint_{[k_i]\in S^{n-1}}
F(\left<k_1'[k_1],\vec{z}\right>) F(\left<k_2'[k_2],\vec{z}\right>  ) g_{\theta'}\left(\left<\widetilde{k_1'[k_1]},\widetilde{k_2'[k_2]}\right>\right)d\mu(k_1')d\sigma([k_1])d\mu(k_2')d\sigma([k_2])
\\
&= \iint_{[k_i]\in S^{n-1}}F(\left<[k_1],\vec{z}\right>) F(\left<[k_2],\vec{z}\right>  ) d\sigma([k_1]) d\sigma([k_2]) \iint_{k_i'\in O(n-1)}g_{\theta'}\left(\left<k_1'\widetilde{[k_1]}, k_2' \widetilde{[k_2]}\right>\right)d\mu(k_1')d\mu(k_2').
\end{split}
\]
We note that 
\[
\iint_{k_i'\in O(n-1)}g_{\theta'}\left(\left<k_1'\widetilde{[k_1]}, k_2' \widetilde{[k_2]}\right>\right)d\mu(k_1')d\mu(k_2')=g_{\theta',0},
\]
and 
\[
 \iint_{[k_i]\in S^{n-1}}F(\left<[k_1],\vec{z}\right>) F(\left<[k_2],\vec{z}\right>  ) d\sigma([k_1]) d\sigma([k_2])= \int_{-1}^{1}\int_{-1}^{1} F(u)F(v)  d\mu_{\frac{n-3}{2}}(u) \mu_{\frac{n-3}{2}}(v)=F_0^2.
\]
Therefore, 
\[
h_0=g_{\theta',0}F_0^2,\]
as required. 
\end{proof}
\begin{proposition}
We have 
\[
\cal{L}(h)=\cal{L}(g_{\theta'})\frac{\| F \|^2_2}{F_0^2}.
\]
\end{proposition}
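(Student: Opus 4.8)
The plan is to combine the two lemmas just established, namely Lemma~\ref{h(1)} and Lemma~\ref{h_0}, together with the definition $\cal{L}(h)=h(1)/h_0$. This is essentially a one-line computation once those two lemmas are in hand, so the main "obstacle" is simply bookkeeping: making sure the normalizing measures $d\mu_{\frac{n-3}{2}}$ and $d\mu_{\frac{n-4}{2}}$ match up correctly and that $h_0\neq 0$ so that the quotient makes sense.

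First I would recall that by definition $\cal{L}(f)=f(1)/f_0$ for any $f=\sum_i f_i p_i$ with $f_0>0$; here the relevant $f_0$ is the zeroth Fourier coefficient of $h$ in the orthonormal basis for $d\mu_{\frac{n-3}{2}}$, which is exactly $h_0$ as computed in Lemma~\ref{h_0}. Then I would substitute: by Lemma~\ref{h(1)}, $h(1)=g_{\theta'}(1)\,\|F\|_2^2$, and by Lemma~\ref{h_0}, $h_0=g_{\theta',0}F_0^2$. Dividing gives
\[
\cal{L}(h)=\frac{h(1)}{h_0}=\frac{g_{\theta'}(1)\,\|F\|_2^2}{g_{\theta',0}\,F_0^2}=\frac{g_{\theta'}(1)}{g_{\theta',0}}\cdot\frac{\|F\|_2^2}{F_0^2}=\cal{L}(g_{\theta'})\frac{\|F\|_2^2}{F_0^2},
\]
where in the last step I use that $\cal{L}(g_{\theta'})=g_{\theta'}(1)/g_{\theta',0}$, which is the definition of $\cal{L}$ applied to $g_{\theta'}\in D(d\mu_{\frac{n-4}{2}},\cos\theta')$ (recall $g_{\theta',0}$ is precisely its zeroth Fourier coefficient with respect to $d\mu_{\frac{n-4}{2}}$).

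The only thing to be slightly careful about is well-definedness: we need $g_{\theta',0}>0$ and $F_0\neq 0$ so that the expression $\cal{L}(h)=g_{\theta',0}F_0^2$ in the denominator is nonzero, and indeed $g_{\theta',0}>0$ holds because $g_{\theta'}\in D(d\mu_{\frac{n-4}{2}},\cos\theta')$ has positive zeroth coefficient by condition~(1) of the definition of $D(\mu,s)$; one assumes $F$ is chosen with $F_0\neq 0$ (otherwise $h\equiv 0$ and the statement is vacuous). I do not expect any real difficulty here — the substantive content was already extracted into Lemmas~\ref{h(1)} and~\ref{h_0}, and this proposition is just the assembly step that records the clean multiplicative form of the bound, which is what gets fed into the later optimization over the auxiliary function $F$.
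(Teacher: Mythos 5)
Your proof is correct and takes exactly the same route as the paper, which simply cites Lemma~\ref{h(1)} and Lemma~\ref{h_0} and divides. The extra remarks about $g_{\theta',0}>0$ and $F_0\neq 0$ are sensible bookkeeping but do not constitute a different argument.
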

\begin{proof}
This follows immediately from Lemmas~\ref{h(1)} and~\ref{h_0}. 
\end{proof}
\subsubsection{Criterion for  $h\in D(d\mu_{\frac{n-3}{2}},\cos\theta)$}\label{liftsection}
Finally, we give a criterion which implies $h\in D(d\mu_{\frac{n-3}{2}},\cos\theta).$ 
Recall that $0<\theta< \theta',$ and $0<r<R<1$ are as defined in Section~\ref{GM}. Let $s^{\prime}=\cos(\theta^{\prime})$ and $s=\cos(\theta).$ Note that $s^{\prime}<s$,  $r=\sqrt{\frac{s-s^{\prime}}{1-s^\prime}}$, and $R=\cos\gamma_{\theta,\theta'}$ as in equation~\eqref{cosgamma}. We define
\[
\chi(y)
:=\begin{cases}
1 &\text{ for } r\leq y\leq R,
\\
0 &\text{otherwise}.
\end{cases}
\]
\begin{proposition}\label{lift}
Suppose that  $g_{\theta'}\in  D(d\mu_{\frac{n-4}{2}},\cos\theta^{\prime})$ is given and $h$ is defined as in \eqref{relative} for some  $F$.
Suppose that $F(x)$ is a positive integrable function giving rise to an $h$ such that $h(t)\leq 0$ for every $-1\leq t\leq\cos\theta$. Then
\[
h\in D(d\mu_{\frac{n-3}{2}},\cos\theta),
\]
and 
\[
 M(n,\theta)\leq \cal{L}(h)=\cal{L}(g_{\theta'})\frac{\| F \|^2_2}{F_0^2}.
\]
Among all positive integrable functions $F$ with compact support inside $[r,R]$, $\chi$ minimize the value of $ \cal{L}(h),$ and for $F=\chi$ we have 
\[
M(n,\theta)\leq \cal{L}(h)\leq \frac{ \cal{L}(g_{\theta'})}{\lambda_n(\theta,\theta')}(1+O(ne^{-nc})),
\]
where $c=\frac{1}{2}\log\left(\frac{1-r^2}{1-R^2}\right)>0.$
\end{proposition}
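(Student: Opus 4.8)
The plan is to verify the two defining conditions of $D(d\mu_{\frac{n-3}{2}},\cos\theta)$ for $h$, then compute $\cal{L}(h)$ and optimize over $F$. First I would check condition~\eqref{newlp}, that all Fourier coefficients $h_i$ (in the basis orthonormal for $d\mu_{\frac{n-3}{2}}$) are non-negative and $h_0>0$. The key observation is that $h$ is built from $h(\vec{x},\vec{y};\vec{z})$ by averaging over $O(n)$, and the earlier lemmas already establish that $h(\vec{x},\vec{y};\vec{z})$ is a positive definite function in $\vec{x},\vec{y}$ on $S^{n-1}$ for each fixed $\vec{z}$; averaging over the Haar measure preserves positive definiteness, so $h(\vec{x},\vec{y})$ is positive definite and point-pair invariant. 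A classical fact (Schoenberg) is that a positive definite point-pair invariant function on $S^{n-1}$ has non-negative coefficients in the Gegenbauer expansion, i.e.\ exactly $h_i\geq 0$; and $h_0=g_{\theta',0}F_0^2>0$ by Lemma~\ref{h_0}, using $F>0$ (so $F_0>0$) and $g_{\theta',0}>0$ (since $g_{\theta'}\in D(d\mu_{\frac{n-4}{2}},\cos\theta')$ has positive zeroth coefficient). Condition~(2), that $h(t)\le 0$ on $[-1,\cos\theta]$, is assumed in the hypothesis, so there is nothing further to prove for membership.

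Next I would record $\cal{L}(h)=\cal{L}(g_{\theta'})\frac{\|F\|_2^2}{F_0^2}$, which is the Proposition proved immediately above, and invoke the Delsarte bound~\eqref{Dbound} together with $\cal{L}(h)=f(1)/f_0$ to conclude $M(n,\theta)\le \cal{L}(h)$. It remains to minimize $\|F\|_2^2/F_0^2$ over positive integrable $F$ supported in $[r,R]$. By Cauchy--Schwarz applied on the probability space $([-1,1],d\mu_{\frac{n-3}{2}})$, for any $F$ supported on a set $E$ of measure $\mu(E)$ we have $F_0^2=\bigl(\int_E F\,d\mu\bigr)^2\le \mu(E)\int_E F^2\,d\mu=\mu(E)\|F\|_2^2$, with equality iff $F$ is constant on $E$. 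Hence $\|F\|_2^2/F_0^2\ge 1/\mu([r,R])$ with equality for $F=\chi$, the normalized indicator of the strip. Thus for $F=\chi$ we get $\cal{L}(h)=\cal{L}(g_{\theta'})/\mu([r,R])$, and I must check that this $\chi$ does satisfy condition~(2); this is exactly the content of the strip-projection lemma established in Section~\ref{GM}: any two points of $\textup{Str}_{\theta,\theta'}(\vec{z})$ at angular distance $\ge\theta$ project to points at radial angle $\ge\theta'$, so $g_{\theta'}$ evaluated at the projected inner product is $\le 0$ whenever $t\le\cos\theta$; together with $F=\chi\ge 0$ and averaging over $O(n)$ this forces $h(t)\le 0$ on $[-1,\cos\theta]$.

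Finally, to pass from $\mu([r,R])=\mu(\textup{Str}_{\theta,\theta'})$ to $\mu_n(\theta,\theta')$ with the claimed error term, I would reuse the estimate proved at the end of the proof of Proposition~\ref{gmim}:
\[
1\ge \frac{\mu(\textup{Str}_{\theta,\theta'})}{\mu_n(\theta,\theta')}\ge 1-\frac{2(n-2)}{1-r}e^{-\frac{n-3}{2}\log\left(\frac{1-r^2}{1-R^2}\right)},
\]
so that $\frac{1}{\mu(\textup{Str}_{\theta,\theta'})}=\frac{1}{\mu_n(\theta,\theta')}\bigl(1+O(ne^{-nc})\bigr)$ with $c=\frac12\log\left(\frac{1-r^2}{1-R^2}\right)>0$ (recall $R>r$). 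Combining the three displays gives $M(n,\theta)\le \cal{L}(h)\le \frac{\cal{L}(g_{\theta'})}{\mu_n(\theta,\theta')}(1+O(ne^{-nc}))$, as claimed.

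I expect the only genuinely delicate point to be the verification that $F=\chi$ satisfies condition~(2): one must be careful that the strip bound from Section~\ref{GM} controls $\langle\tilde{\vec{x}},\tilde{\vec{y}}\rangle$ from above by $s'=\cos\theta'$ for \emph{all} pairs with $t\le\cos\theta$ lying in the strip (not just those exactly at distance $\theta$), and that points outside the strip contribute zero because $F=\chi$ vanishes there; the positive-definiteness/Schoenberg input for condition~(1) is standard and the Cauchy--Schwarz optimization is routine.
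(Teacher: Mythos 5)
The proposal is correct and follows essentially the same route as the paper's proof: membership in $D(d\mu_{\frac{n-3}{2}},\cos\theta)$ comes from the positive definiteness of the averaged kernel (yielding non-negative Gegenbauer coefficients) together with the assumed negativity on $[-1,\cos\theta]$; $\cal{L}(h)=\cal{L}(g_{\theta'})\|F\|_2^2/F_0^2$ is quoted from the preceding lemmas; the negativity of $h$ for $F=\chi$ is verified via the strip-projection argument of Section~\ref{GM}; Cauchy--Schwarz gives the optimality of $\chi$; and the error factor is imported from the estimate at the end of the proof of Proposition~\ref{gmim}. The only cosmetic difference is that you invoke Schoenberg's theorem explicitly where the paper simply says the first part ``follows from the previous lemmas,'' and you run Cauchy--Schwarz before rather than after the negativity check, but the content is identical.
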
 
\begin{proof}The first part follows from the previous lemmas and propositions. Let us specialize to the situation where $F$ is merely assumed to be a positive integrable function with compact support inside $[r,R]$. Let us first show that $h(t)\leq 0$ for $t\leq s$. We have 
 \[
  h(t):=\int_{O(n)}  h(\boldsymbol{a},\boldsymbol{b};k\vec{z}) d\mu(k),
 \]
 where $
h(\boldsymbol{a},\boldsymbol{b};\vec{z}):= F(u)F(v) g_{\theta'}\left(\frac{t-uv}{\sqrt{(1-u^2)(1-v^2)}}\right).
$
First, note that $F(u)F(v)\neq 0$ implies that $\boldsymbol{a}$ and $\boldsymbol{b}$ belong to $\textup{Str}_{\theta,\theta'}(\vec{z}).$
By Lemma~\ref{anglelemma}, the radial angle between $\tilde{\boldsymbol{a}}$ and $\tilde{\boldsymbol{b}}$ is at least $\theta'$, and so
\[\frac{t-uv}{\sqrt{(1-u^2)(1-v^2)}}=\left<\tilde{\boldsymbol{a}},\tilde{\boldsymbol{b}}\right>\in [-1,\cos\theta'].\]
Therefore
\[
g_{\theta'}\left(\frac{t-uv}{\sqrt{(1-u^2)(1-v^2)}}\right)\leq 0
\]
when $F(u)F(v)\neq 0.$ Hence, the integrand $h(\boldsymbol{a},\boldsymbol{b},\boldsymbol{z})$ is non-positive when $t\in[-1,\cos\theta]$, and so $h(t)\leq 0$ for $t\leq s$.
\\

It is easy to see that when $F=\chi$, 
\[
\|F\|_2^2=\mu(\textup{Str}_{\theta,\theta'}(\vec{z}))
\]
and 
\[
F_0=\mu(\textup{Str}_{\theta,\theta'}(\vec{z})).
\]
Therefore, by our estimate in the proof of Proposition~\ref{gmim} we have 
\[
M(n,\theta)\leq \cal{L}(h)\leq  \frac{ \cal{L}(g_{\theta'})}{\lambda_n(\theta,\theta')}\left(1+O\left(ne^{-\frac{n}{2}\log\left(\frac{1-r^2}{1-R^2}\right)}\right)\right).
\]
Finally, the optimality follows from the Cauchy--Schwarz inequality. More precisely, since $F(x)$ has compact support inside $[r,R]$, we have 
\[
\mu(\textup{Str}_{\theta,\theta'}(\vec{z})) \|F\|_2^2 \geq  F_0^2.
\]
Therefore, $\cal{L}(h)=\frac{g_{\theta'}(1) }{g_{\theta',0}}\frac{\| F \|^2_2}{F_0^2}\geq  \frac{\cal{L}(g_{\theta'})}{\mu(\textup{Str}_{\theta,\theta'}(\vec{z}))}  $ with equality only when $F=\chi.$
\end{proof}
\subsection{Sphere packings}
Suppose $0<\theta\leq\pi$ is a given angle, and suppose $g_{\theta}\in D(d\mu_{\frac{n-3}{2}},\cos\theta)$. Fixing $\boldsymbol{z}\in\mathbb{R}^n$, for each pair of points $\boldsymbol{a},\boldsymbol{b}\in\mathbb{R}^n\backslash \{\boldsymbol{z}\}$ consider
\[H(\boldsymbol{a},\boldsymbol{b};\boldsymbol{z}):=F(|\boldsymbol{a}-\boldsymbol{z}|)F(|\boldsymbol{b}-\boldsymbol{z}|)g_{\theta}\left(\left<\frac{\boldsymbol{a}-\boldsymbol{z}}{|\boldsymbol{a}-\boldsymbol{z}|},\frac{\boldsymbol{b}-\boldsymbol{z}}{|\boldsymbol{b}-\boldsymbol{z}|}\right>\right),\]
where $F$ is an even  positive function on $\mathbb{R}$ such that it is in $L^1(\mathbb{R}^n)\cap L^2(\mathbb{R}^n)$. We may then define $H(\boldsymbol{a},\boldsymbol{b})$ by averaging over all $\boldsymbol{z}\in\mathbb{R}^n$:
\begin{equation}\label{HTfunct}H(\boldsymbol{a},\boldsymbol{b}):=\int_{\mathbb{R}^n}H(\boldsymbol{a},\boldsymbol{b};\boldsymbol{z})d\boldsymbol{z}.\end{equation}
\begin{lemma}\label{57}
 $H(\boldsymbol{a},\boldsymbol{b})$ is a positive semi-definite kernel on $\mathbb{R}^n$ and depends only on $T=|\boldsymbol{a}-\boldsymbol{b}|$.
\end{lemma}
\begin{proof}
The proof is similar to that of~Lemma~\ref{51}.
\end{proof}
 As before, we abuse notation and write $H(T)$ instead of $H(\boldsymbol{a},\boldsymbol{b})$ when $T=|\boldsymbol{a}-\boldsymbol{b}|$. The analogue of Proposition~\ref{lift} is then the following.
\begin{proposition}\label{liftsphere}
Let $0<\theta\leq\pi$ and suppose $g_{\theta}\in D(d\mu_{\frac{n-3}{2}},\cos\theta)$. Suppose $F$ is as above such that $H(T)\leq 0$ for every $T\geq 1$. Then 
\begin{equation}\label{densitygeneralbound}
\delta_n\leq\frac{\vol(B_1^n)\|F\|_{L^2(\mathbb{R}^n)}^2}{2^n\|F\|_{L^1(\mathbb{R}^n)}^2}\cal{L}(g_{\theta}),
\end{equation}
where $\vol(B_1^n)$ is the volume of the $n$-dimensional unit ball. In particular, if $F=\chi_{[0,r]}$, where  $0\leq r\leq 1$, is such that it gives rise to an $H$ satisfying $H(T)\leq 0$ for every $T\geq 1$, then\begin{equation}\label{densitybound}\delta_n\leq \frac{\cal{L}(g_{\theta})}{(2r)^n}.
\end{equation} 
\end{proposition}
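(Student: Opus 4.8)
The plan is to mirror the proof of Proposition~\ref{lift}, but in the Euclidean (Cohn--Elkies) setting rather than on the sphere. First I would verify positive definiteness of $H$. Each function $H(\boldsymbol{x},\boldsymbol{y};\boldsymbol{z})=F(|\boldsymbol{x}-\boldsymbol{z}|)F(|\boldsymbol{y}-\boldsymbol{z}|)g_\theta(\langle\tilde{\boldsymbol{x}},\tilde{\boldsymbol{y}}\rangle)$ is, for fixed $\boldsymbol{z}$, a positive definite kernel in $\boldsymbol{x},\boldsymbol{y}$: the factors $F(|\boldsymbol{x}-\boldsymbol{z}|)$ are scalars attached to each point, and $g_\theta\in D(d\mu_{\frac{n-3}{2}},\cos\theta)$ means $g_\theta$ has non-negative expansion in the Gegenbauer polynomials $p_k$ which are positive definite on $S^{n-1}$, hence on the unit vectors $\tilde{\boldsymbol{x}}$ emanating from $\boldsymbol{z}$. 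Averaging over $\boldsymbol{z}\in\mathbb{R}^n$ (an integral of positive definite kernels against a positive measure) preserves positive definiteness, so $H$ is positive definite, and by construction it is point-pair invariant, i.e. a radial function $H(T)$ of $T=|\boldsymbol{x}-\boldsymbol{y}|$. Together with the hypothesis $H(T)\le 0$ for $T\ge 1$, the function $H$ satisfies the hypotheses of the Cohn--Elkies linear programming bound for packings by balls of radius $1/2$ (equivalently, with the convention $|x-y|\ge 1$ for centers), which yields $\delta_n\le \vol(B_{1/2}^n)\,\widehat{H}(0)/H(0)$, up to the usual normalization bookkeeping.

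Next I would compute $H(0)$ and $\widehat{H}(0)$ in terms of $F$ and $g_\theta$, in complete analogy with Lemmas~\ref{h(1)} and~\ref{h_0}. For $H(0)$ we set $\boldsymbol{x}=\boldsymbol{y}$, so $\langle\tilde{\boldsymbol{x}},\tilde{\boldsymbol{y}}\rangle=1$ and
\[
H(0)=g_\theta(1)\int_{\mathbb{R}^n}F(|\boldsymbol{x}-\boldsymbol{z}|)^2\,d\boldsymbol{z}=g_\theta(1)\,\|F\|_{L^2(\mathbb{R}^n)}^2.
\]
For $\widehat{H}(0)=\int_{\mathbb{R}^n}H(\boldsymbol{0},\boldsymbol{y})\,d\boldsymbol{y}$, I would change the order of integration, fix $\boldsymbol{z}$, and use that the average of $g_\theta(\langle\tilde{\boldsymbol{0}},\tilde{\boldsymbol{y}}\rangle)$ over the sphere of directions from $\boldsymbol{z}$ picks out the zeroth Gegenbauer coefficient $g_{\theta,0}$ of $g_\theta$, exactly as in Lemma~\ref{h_0}, giving
\[
\widehat{H}(0)=g_{\theta,0}\Big(\int_{\mathbb{R}^n}F(|\boldsymbol{x}-\boldsymbol{z}|)\,d\boldsymbol{x}\Big)^2=g_{\theta,0}\,\|F\|_{L^1(\mathbb{R}^n)}^2.
\]
Plugging into the Cohn--Elkies bound and recalling $\cal{L}(g_\theta)=g_\theta(1)/g_{\theta,0}$ and $\vol(B_{1/2}^n)=\vol(B_1^n)/2^n$ produces
\[
\delta_n\le\frac{\vol(B_1^n)\,\|F\|_{L^2(\mathbb{R}^n)}^2}{2^n\,\|F\|_{L^1(\mathbb{R}^n)}^2}\,\cal{L}(g_\theta),
\]
which is~\eqref{densitygeneralbound}.

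For the final clause I would specialize $F=\chi_{[0,\overline{r}+\delta]}$, the indicator of a ball of radius $\overline{r}+\delta$ (as a radial function on $\mathbb{R}^n$). Then $\|F\|_{L^1(\mathbb{R}^n)}^2=\vol(B_{\overline{r}+\delta}^n)^2$ and $\|F\|_{L^2(\mathbb{R}^n)}^2=\vol(B_{\overline{r}+\delta}^n)$, since $\chi^2=\chi$, so the ratio $\|F\|_{L^2}^2/\|F\|_{L^1}^2$ equals $1/\vol(B_{\overline{r}+\delta}^n)=1/\big((\overline{r}+\delta)^n\vol(B_1^n)\big)$. Substituting into~\eqref{densitygeneralbound} cancels $\vol(B_1^n)$ and leaves
\[
\delta_n\le\frac{\cal{L}(g_\theta)}{\big(2(\overline{r}+\delta)\big)^n},
\]
which is~\eqref{densitybound}; the hypothesis $\overline{r}+\delta\le 1$ guarantees the support condition needed for the averaging geometry (so that the projected pairs are at angle $\ge\theta$ whenever both weights are nonzero), and the hypothesis that this $F$ yields $H(T)\le 0$ for $T\ge 1$ is exactly what makes the first part applicable. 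The main obstacle, and the point I would treat most carefully, is the geometric lemma underlying the sign of $H$: one must show that if $F$ is supported in the ball of radius $\overline{r}+\delta$ around $\boldsymbol{z}$ and $|\boldsymbol{x}-\boldsymbol{y}|\ge 1$, then the normalized tangent/radial directions $\tilde{\boldsymbol{x}},\tilde{\boldsymbol{y}}$ from $\boldsymbol{z}$ are at angular distance $\ge\theta$, so that $g_\theta(\langle\tilde{\boldsymbol{x}},\tilde{\boldsymbol{y}}\rangle)\le 0$ on the support of the integrand — this is the Euclidean analogue of the Agrell--Vargy--Zeger strip estimate used in Section~\ref{GM} and in Proposition~\ref{lift}, and determining the precise admissible value of $\overline{r}$ (hence $\delta$) is where the real work lies; for the stated proposition this is packaged into the hypothesis, so here it suffices to run the above computation.
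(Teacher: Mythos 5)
Your proposal follows the same approach as the paper: apply Cohn--Elkies (Theorem~3.1 of~\cite{Elkies}) to $H$, compute $H(0)=g_\theta(1)\|F\|_{L^2(\mathbb{R}^n)}^2$ and $\widehat{H}(0)=g_{\theta,0}\|F\|_{L^1(\mathbb{R}^n)}^2$ in analogy with Lemmas~\ref{h(1)} and~\ref{h_0}, and then specialize to $F=\chi_{[0,\overline{r}+\delta]}$. One small slip: you wrote the Cohn--Elkies bound as $\delta_n\le\vol(B_{1/2}^n)\,\widehat{H}(0)/H(0)$, but the ratio should be $H(0)/\widehat{H}(0)$ (as in the paper's displayed inequality); your subsequent substitution implicitly uses the correct ratio, so the final formulae~\eqref{densitygeneralbound} and~\eqref{densitybound} come out right.
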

\begin{proof}
The proof of this proposition is similar to that of Theorem 3.4 of Cohn--Zhao~\cite{CohnZhao}.  We focus our attention on proving inequality~\eqref{densitygeneralbound}. Suppose we have a packing of $\mathbb{R}^n$ of density $\Delta$ by non-overlapping balls of radius $\frac{1}{2}$. By Theorem 3.1 of Cohn--Elkies~\cite{Elkies}, we have
\[\Delta\leq\frac{\vol(B_1^n)H(0)}{2^n\widehat{H}(0)}.\]
Note that $H(0)=g_{\theta}(1)\|F\|_{L^2(\mathbb{R}^n)}^2$, and that
\[\widehat{H}(0)=\int_{\mathbb{R}^n}H(|\boldsymbol{z}|)d\boldsymbol{z}=g_{\theta,0}\|F\|_{L^1(\mathbb{R}^n)}^2.\]
As a result, we obtain inequality~\eqref{densitygeneralbound}. The rest follows from a simple computation.
\end{proof}
Note that the situation $r=\frac{1}{2\sin(\theta/2)}$ with $\pi/3\leq\theta\leq \pi$ corresponds to Theorem 3.4 of Cohn--Zhao~\cite{CohnZhao}, as checking the negativity condition $H(T)\leq 0$ for $T\geq 1$ follows from Lemma 2.2 therein. The factor $2^n$ comes from considering functions where the negativity condition is for $T\geq 1$ instead of $T\geq 2$.
\begin{remark}
In this paper, we consider characteristic functions; however, it is an interesting open question to determine the optimal such $F$ in order to obtain the best bounds on sphere packing densities through this method. \end{remark}
\subsection{Incorporating geometric improvement into linear programming}
In proving upper bounds on $M(n,\theta)$, Levenshtein~\cite{Leven79,Levenshtein}, building on Kabatyanskii--Levenshtein~\cite{KL}, constructed feasible test functions $g_{\theta}\in D(d\mu_{\frac{n-3}{2}},\cos\theta)$ for the Delsarte linear programming problem with $\cal{L}(g_{\theta})=M_{\textup{Lev}}(n,\theta)$ defined in~\eqref{defmlev}. This gave the bound
\begin{equation}\label{KLsub}M(n,\theta)\leq M_{\textup{Lev}}(n,\theta).\end{equation}
Sidelnikov's geometric inequality~\eqref{sid} gives
\[M(n,\theta)\leq \frac{M(n+1,\theta')}{\lambda_n(\theta,\theta^{\prime})}\]
for angles $0<\theta<\theta'<\frac{\pi}{2}$. Applying this for $0<\theta<\theta^*$ and combining with inequality~\eqref{KLsub},  Kabatyanskii and Levenshtein obtained
\begin{equation}\label{KLup}
M(n,\theta)\leq \frac{M_{\textup{Lev}}(n+1,\theta^*)}{\lambda_n(\theta,\theta^*)}.
\end{equation}
From~\cite{KL}, it is known that this is exponentially better than inequality~\eqref{KLsub} for $0<\theta<\theta^*$. Finding functions $h\in  D(d\mu_{\frac{n-3}{2}},\cos\theta)$ with $\cal{L}(h)<\cal{L}(g_{\theta})$ was suggested by Levenshtein in~\cite[page 117]{Levenshtein}.  In fact, Boyvalenkov--Danev--Bumova~\cite{Danyo} gives necessary and sufficient conditions for constructing extremal \textit{polynomials} that improve Levenshtein's bound. However, their construction does not exponentially improve inequality~\eqref{KLsub} for $0<\theta<\theta^*$. In contrast to their construction, Proposition~\ref{lift} gives the following corollary stating that our construction of the function $h$ gives an exponential improvement in the linear programming problem comparing to Levenshtein's optimal polynomials for $0<\theta<\theta^*$. This is not to say that we exponentially improve the bounds given for spherical codes and sphere packings; we provide a single function using which the Delsarte linear programming method gives a better version of the exponentially better inequality~\eqref{KLup}.
\begin{corollary}\label{introcorollary}
Fix $0<\theta<\theta^*.$  Let $h_{\theta^*}\in D(d\mu_{\frac{n-3}{2}},\cos\theta)$ be the function associated to Levenshtein's $g_{n-1,\theta^*}$ constructed in Proposition~\ref{lift}. Then
\[M(n,\theta)\leq \cal{L}(h_{\theta^*})\leq e^{-n(\delta_{\theta}+o(1))}M_{\textup{Lev}}(n,\theta),\]
where $o(1)\rightarrow 0$ as $n\rightarrow\infty$ and $\delta_{\theta}:=\Delta(\theta)-\Delta(\theta^*)>0$, with
\[\Delta(\theta):=\frac{1+\sin\theta}{2\sin\theta}\log\frac{1+\sin\theta}{2\sin\theta}-\frac{1-\sin\theta}{2\sin\theta}\log\frac{1-\sin\theta}{2\sin\theta}+\frac{1}{2}\log(1-\cos\theta).\]
\end{corollary}
\begin{proof}
Let $0<\theta<\theta^{\prime}\leq\pi/2$. Recall the notation of Proposition~\ref{lift}. Associated to a function $g_{\theta'}$ satisfying the Delsarte linear programming conditions in dimension $n-1$ and for angle $\theta'$, in Proposition~\ref{lift} we constructed an explicit $h_{\theta'}\in D(d\mu_{\frac{n-3}{2}},\cos\theta)$ such that
\[\cal{L}(h_{\theta'})\leq \frac{ \cal{L}(g_{\theta'})}{\lambda_n(\theta,\theta')}(1+O(ne^{-nc})),\]
where $c>0$ is a specific constant depending only on $\theta$ and $\theta'$. Therefore,

\[\frac{1}{n}\log\cal{L}(h_{\theta'})\leq \frac{1}{n}\log\cal{L}(g_{\theta'}) - \frac{\log\lambda_n(\theta,\theta')}{n} + O(e^{-nc}).\]
By~\cite[Theorem 4]{KL} 
\[
\lim_{n\to \infty} \frac{1}{n}\log\cal{L}(g_{\theta'})=\frac{1+\sin\theta'}{2\sin\theta'}\log\frac{1+\sin\theta'}{2\sin\theta'}-\frac{1-\sin\theta'}{2\sin\theta'}\log\frac{1-\sin\theta'}{2\sin\theta'}.
\]
 It is easy to show that~\cite[Proof of Theorem 4]{KL}
 \[
 \lim_{n\to\infty}\frac{\log\lambda_n(\theta,\theta')}{n}= \frac{1}{2}\log\frac{1-\cos\theta}{1-\cos\theta'}.
 \]

 Hence,
 \[
 \frac{1}{n}\log\cal{L}(h_{\theta'})\leq \Delta(\theta')-\frac{1}{2}\log(1-\cos\theta)+o(1),
 \]
 where
 \[
 \Delta(\theta'):=\frac{1+\sin\theta'}{2\sin\theta'}\log\frac{1+\sin\theta'}{2\sin\theta'}-\frac{1-\sin\theta'}{2\sin\theta'}\log\frac{1-\sin\theta'}{2\sin\theta'}+\frac{1}{2}\log(1-\cos\theta').
 \]
 Note that 
 \[
 \frac{d}{d\theta'} \Delta(\theta')=-\frac{\csc^2(\theta')}{2}\left(\cos \theta' \log(\frac{1+\sin\theta'}{1-\sin\theta'})-(1+\cos\theta')\sin\theta'\right).
 \]
 As we mentioned before,  $\theta^*:= 62.997...^{\circ}$ is the unique root of the equation 
$\frac{d}{d\theta'} \Delta(\theta')=0$~\cite[Theorem 4]{KL} in the interval $0<\theta'<\pi/2$, which is the unique minimum of $\Delta(\theta')$ for $0<\theta'< \pi/2.$ Hence, for $0<\theta<\theta^*,$ taking $h_{\theta^*}$ the function constructed in Proposition~\ref{lift} associated to Levenshtein's optimal polynomial $g_{n-1,\theta^*}$ for angle $\theta^*$ in dimension $n-1$, and $g_{n,\theta}$ Levenshtein's optimal polynomial for angle $\theta$ in dimension $n$, we obtain
\[
\frac{1}{n} \left(\log\cal{L}(h_{\theta^*}) -\log\cal{L}(g_{n,\theta}) \right) \leq \Delta(\theta^*)-\Delta(\theta)+o(1)<0
\]
for sufficiently large $n$. This concludes the proof of this proposition.
\end{proof}
\subsection{Constant improvement to Barg--Musin and Cohn--Zhao}\label{BMCZ}
In this subsection, we sketch the ideas that go into proving Theorems~\ref{mainbound} that improves inequality~\eqref{Agrell} of Barg--Musin by a constant factor of at least $0.4325$ for every angle $\theta$ with $0<\theta<\theta^*$. The improvement to the Cohn--Zhao inequality~\eqref{CohnZhaoineq} for sphere packings is similar. We will complete the technical details in the rest of the paper.\\
\\
\begin{figure}[h!]\label{yellow}
	\centering
		\includegraphics[width=40mm,scale=0.5]{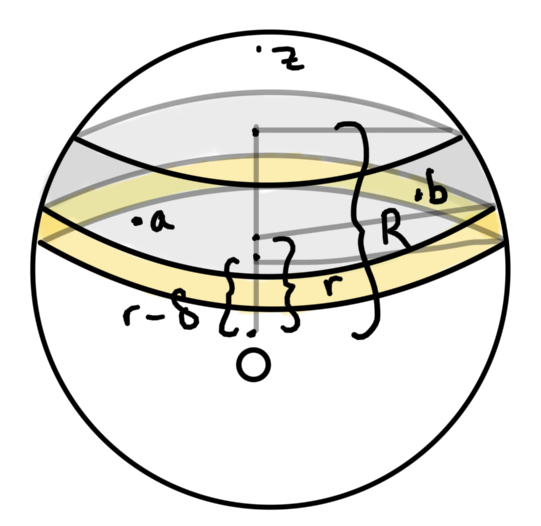}
	\label{fig:img1}
	\caption{$r$ and $R$ defined in~\eqref{littler} and~\eqref{hugeR}.}\label{img5}
\end{figure}

Recall that given any test function $g_{\theta'}\in  D(d\mu_{\frac{n-4}{2}},\cos\theta^{\prime})$ and $F$ an arbitrary integrable real valued function on $[-1,1]$, we defined
\[
h(\boldsymbol{a},\boldsymbol{b};\vec{z}):= F(u)F(v) g_{\theta'}\left(\frac{t-uv}{\sqrt{(1-u^2)(1-v^2)}}\right)
\]
in equation~\eqref{hpair}, using which we defined the function
\begin{equation}\label{hxyint}
  h(\boldsymbol{a},\boldsymbol{b}):=\int_{O(n)}  h(\boldsymbol{a},\boldsymbol{b};k\vec{z}) d\mu(k).
\end{equation}
$h(\boldsymbol{a},\boldsymbol{b})$ is a point-pair invariant function, and so we viewed it as a function $h(t)$ of $t:=\left<\boldsymbol{a},\boldsymbol{b}\right>$. As we saw in the proof of Proposition~\ref{lift}, for $F$ positive integrable and compactly supported on $[r,R]$, $F(u)F(v)\neq 0$ implies that $\boldsymbol{a},\boldsymbol{b}\in\text{Str}_{\theta,\theta'}(\vec{z})$, where
\[\textup{Str}_{\theta,\theta'}(\vec{z}):=\{\boldsymbol{a}\in S^{n-1}:r\leq\left<\boldsymbol{a},\vec{z}\right>\leq R\}\]
is the grey strip illustrated above in Figure~\ref{img5}. From this, we obtained that
\[g_{\theta'}\left(\frac{t-uv}{\sqrt{(1-u^2)(1-v^2)}}\right)\leq 0,\]
using which we obtained that the integrand in~\eqref{hxyint} is non-positive, giving us 
\begin{equation}\label{hnegt}
h(t)\leq 0\textup{ for }t\in[-1,\cos\theta].
\end{equation}
Our insight is that the integrand in~\eqref{hxyint} need not be non-positive everywhere in order to have~\eqref{hnegt}. In fact, we will show that when $F$ is the characteristic function with support $[r-\delta,R]$ for some $\delta>\frac{c}{n}$, where $c>0$ is independent of $n$, and $g_{\theta'}$ are Levenshtein's optimal polynomials, we continue to have~\eqref{hnegt}. This corresponds to allowing $\boldsymbol{a},\boldsymbol{b}$ be contained in a slight enlargement of the strip $\textup{Str}_{\theta,\theta'}(\vec{z})$ including also the yellow region in Figure~\ref{img5}.\\
\\
There are two main ingredients that go into determining an explicit lower bound for $\delta$. The first is related to understanding the behavior of Levenshtein's optimal polynomials near their largest roots. This reduces to understanding the behavior of Jacobi polynomials near their largest roots. This is done in Subsection~\ref{local}. The other idea is estimating the density function of the inner product matrix of  triple uniformly distributed points on high-dimensional spheres. This allows us to rewrite the integral~\eqref{hxyint} and its sphere packing analogue using different coordinates. This is done in Section~\ref{tripleden}.

\section{Conditional density functions}\label{tripleden}
In this section, we rewrite the averaging integral~\eqref{hxyint} when $F$ is a characteristic function in different coordinates. We also do the same for sphere packings. See equations~\eqref{htdensity} and~\eqref{Htdensity}. In order to prove Theorems~\ref{mainbound} and~\ref{spherepacking} in Section~\ref{Comparison}, we will also need to estimate certain conditional densities, which is the main purpose of this section.

\subsection{Conditional density for spherical codes}
Let $\mathcal{S}$ be the space of positive semi-definite symmetric $3\times 3$ matrices with $1$ on diagonal. Let 
\[\pi: (S^{n-1})^3 \rightarrow \mathcal{S}\] be the map that sends triple points to their pairwise inner products via
\[(\boldsymbol{a},\boldsymbol{b},\vec{z})\mapsto \begin{bmatrix}   1& t& u \\ t &1 & v \\ u &v &1\end{bmatrix},\]
where $(t,u,v):=(\left<\boldsymbol{a},\boldsymbol{b}\right>,\left<\boldsymbol{a},\vec{z}\right>,\left<\boldsymbol{b},\vec{z}\right>)$. Let $\mu(u,v,t)dudvdt$ be the density function of  
the pushforward of the product of uniform probability measures on $(S^{n-1})^3 $ to the coordinates $(u,v,t)$.

\begin{proposition}
We have
\[
\mu(u,v,t):= C\det\begin{bmatrix}   1& t& u \\ t &1 & v \\ u &v &1\end{bmatrix}^{\frac{n-4}{2}}.
\]
where $C$ is a normalization constant such that $\int_{\mathcal{S}} \mu(u,v,t) dudvdt=1$
\end{proposition}
\begin{proof}
The density of the conditional measure is given by
\[
\mu(u,v,t)=\mu(u,v;t)\mu(t)
\]
where $\mu(t)=(1-t^2)^{\frac{n-3}{2}}$ and $\mu(u,v;t)$ is the conditional density of $u,v$ given $t$. We note that this conditional density function is proportional to 
\[
\lim_{\varepsilon\to 0}\frac{\text{vol}(Str(u,v,\varepsilon;\boldsymbol{a},\boldsymbol{b}))}{\varepsilon^2}.
\]
where 
\[
Str(u,v,\varepsilon;\boldsymbol{a},\boldsymbol{b}):=\{\vec{z}\in S^{n-1}: 0\leq \left<\boldsymbol{a},\vec{z}\right>-u,  \left<\boldsymbol{b},\vec{z}\right>-v \leq \varepsilon   \}.
\]
We write $\vec{z}=\vec{z}^{\perp}+\vec{z}^{\|}$, where $\vec{z}^{\|}$ is the projection of $\vec{z}$ onto the two dimensional plane spanned by $\boldsymbol{a}$ and $\boldsymbol{b}$ and $\vec{z}^{\perp}$ is orthogonal to $\boldsymbol{a}$ and $\boldsymbol{b}$. Fix $\boldsymbol{a}, \boldsymbol{b}$ and  consider the following  map from $S^{n-1}$
\[
\vec{z}\to [\vec{z}^{\|},\vec{z}^{\perp}].
\]
 The above  map has Jacobian $\frac{\det\begin{bmatrix}   1& t& u \\ t &1 & v \\ u &v &1\end{bmatrix}^{1/2}}{\sqrt{1-t^2}}$ with respect to the Euclidean metric. 
We note that the geometric locus of  $\vec{z}^{\|}$ is a rhombus  with area $\frac{\varepsilon^2}{\sqrt{1-t^2}}$.
Moreover, given $\vec{z}^{||}$, the geometric locus of $\vec{z}^{\perp}$ is a sphere of dimension $n-3$ and radius
 \[
 |\vec{z}^{\perp}|=\frac{\det\begin{bmatrix}   1& t& u \\ t &1 & v \\ u &v &1\end{bmatrix}^{1/2}}{\sqrt{1-t^2}}+O(\varepsilon).
\]

Therefore, 
\[
\text{vol}(Str(u,v,\varepsilon;x,y))=\frac{\varepsilon^2}{\sqrt{1-t^2}}\frac{\sqrt{1-t^2}}{\det\begin{bmatrix}   1& t& u \\ t &1 & v \\ u &v &1\end{bmatrix}^{1/2}}\left( \frac{\det\begin{bmatrix}   1& t& u \\ t &1 & v \\ u &v &1\end{bmatrix}^{1/2}}{\sqrt{1-t^2}}\right)^{n-3}(1+O(\varepsilon)),
\]
from which it follows that 
\[
\mu(u,v;t)=C \frac{\det\begin{bmatrix}   1& t& u \\ t &1 & v \\ u &v &1\end{bmatrix}^{\frac{n-4}{2}}}{(1-t^2)^{\frac{n-3}{2}}}
\]
for some constant $C>0$.
This implies our proposition.
 
\end{proof}

Recall that $\theta<\theta^{\prime}$. Let $s^{\prime}=\cos(\theta^{\prime})$ and $s=\cos(\theta).$ Note that $s^{\prime}<s$ and for $r=\sqrt{\frac{s-s^{\prime}}{1-s^\prime}}$, we have $s^{\prime}=\frac{s-r^2}{1-r^2}$ and $0< r<1$. Let $0<\delta=o(\frac{1}{\sqrt{n}})$ that we specify later, and define
\[
\chi(y)
:=\begin{cases}
1 &\text{ for } r-\delta \leq y\leq R,
\\
0 &\text{otherwise}.
\end{cases}
\]
See Figure~\ref{yellow}. $\boldsymbol{a},\boldsymbol{b}$ are in the shaded areas (both grey and yellow) correspond to $u,v$ being in the support of $\chi$. 
%\begin{figure}[h!]
	%\centering
		%\includegraphics[width=40mm,scale=0.5]{img1.jpeg}
	%\label{fig:img1}
	%\caption{}\label{img3}
%\end{figure}
%\newpage
 Recall~\eqref{xlab}, and denote 
\[
x:=\frac{t-uv}{\sqrt{(1-u^2)(1-v^2)}}.
\]
Let $\mu(x;t,\chi)$ be  the induced density function on $x$ subjected to the conditions of fixed $t,$ and $r-\delta\leq u,v\leq R$. Precisely, up to a positive constant multiple that depends on $n,t, R, r-\delta$, we have
\[
\mu(x;t,\chi)=\int_{C_{x,t}}\chi(u)\chi(v)\mu^*(x,l,t)dl,
\]
where the integral is over the curve $C_{x,t}\subset \mathbb{R}^2$ that is given by $\frac{t-uv}{\sqrt{(1-u^2)(1-v^2)}}=x$, $dl$ is the induced Euclidean metric on $C_{x,t},$ and 
\[
\mu^*(x,l,t)dxdldt= \mu(u,v,t)dudvdt.
\]
\begin{figure}[h!]
	\centering
		\includegraphics[width=50mm,scale=0.5]{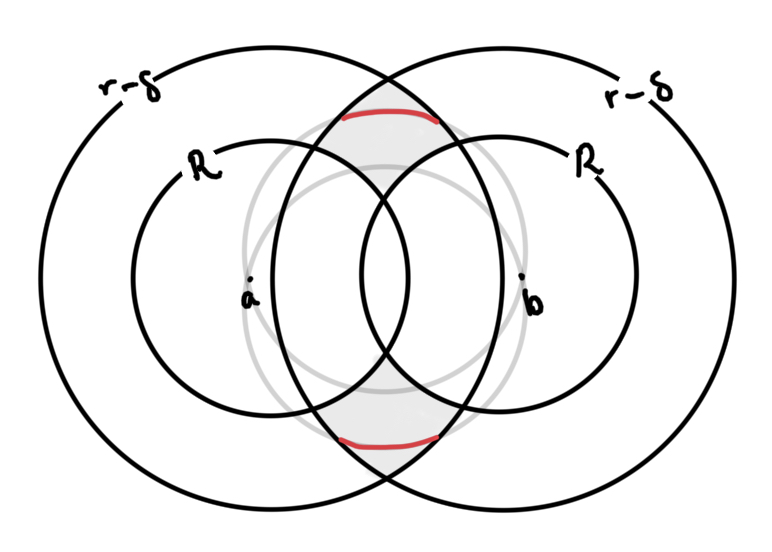}
	\caption{$\boldsymbol{a}$ and $\boldsymbol{b}$ are two points with fixed $t=\left<\boldsymbol{a},\boldsymbol{b}\right>$. The red curve is the $2$-dimensional representation of the locus of points $\boldsymbol{z}$ having $(u,v)$ on the curve $C_{x,t}$. Shaded region corresponds to the support of $\chi(u)\chi(v)$.}\label{curve}
\end{figure}

We explicitly compute $\mu^*(x,l,t).$ We have  
\[
du dv dt= \frac{1}{\sqrt{(\frac{\partial x}{\partial u})^2+(\frac{\partial x}{\partial v} )^2}} dx dl dt,
\]
from which it follows that
\[\mu^*(x,l,t)=\frac{\mu(u,v,t)}{\sqrt{(\frac{\partial x}{\partial u})^2+(\frac{\partial x}{\partial v} )^2}}.\]
Hence,  up to a positive constant multiple that depends on $n,t, R, r-\delta$, 
\[
\mu(x;t,\chi)=\int_{C_{x,t}}\frac{\chi(u)\chi(v)\mu(u,v,t)}{\sqrt{(\frac{\partial x}{\partial u})^2+(\frac{\partial x}{\partial v} )^2}} dl.
\]
We may write the test function constructed in equation~\eqref{relative} with $F=\chi$ and any given $g$  as
\begin{equation}\label{htdensity}
h(t)=\int_{-1}^1g(x)\mu(x;t,\chi)dx,
\end{equation}
where $t=\left<\boldsymbol{a},\boldsymbol{b}\right>$; see Subsection~\ref{BMCZ}. We define for \textit{complex} $x$
\[x^{+}:=\begin{cases}x &\text{ for } x\geq 0, \\
 0   &\text{otherwise.}\end{cases}
 \]
\begin{proposition}\label{triples}
Suppose that $|x-s^{\prime}|=o(\frac{1}{\sqrt{n}}).$
We have, up to a positive constant multiple depending on $n,s,R,r-\delta$,
\[
\mu(x;s,\chi)=\left(\frac{2(1-r^2)^2}{r(1-s)}+o(1)\right)\left(\delta+\sqrt{\frac{s-x}{1-x}}-r  \right)^+  \left(\left(\frac{1-x^2}{x^2}\right) \left(s-r^2\right)^2\right)^{\frac{n-4}{2}}e^{\left(-\frac{2nr\left(\sqrt{\frac{s-x}{1-x}}-r \right)}{ s-r^2}  \right)}.
\]
\end{proposition}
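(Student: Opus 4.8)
The plan is to rewrite $\mu(x;s,\chi)$ as a one–dimensional integral along a level curve and then apply Laplace's method localized at a symmetric point; the hypothesis $|x-s'|=o(1/\sqrt n)$ is exactly what will make the localization work.

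\textbf{Reduction to a level-curve integral.} First I would condition the density $\mu(u,v,t)$ on $t=s$ and restrict to $u,v\in[r-\delta,R]$, obtaining a density on $(u,v)$ proportional to $G(u,v,s)^{(n-4)/2}$ on $[r-\delta,R]^2$, where $G(u,v,t)=\det\begin{bmatrix}1&t&u\\ t&1&v\\ u&v&1\end{bmatrix}=1-u^2-v^2-t^2+2uvt$. The crucial algebraic identity is
\[
G(u,v,s)=(1-u^2)(1-v^2)-(s-uv)^2=(1-u^2)(1-v^2)(1-x^2),\qquad x=x(u,v):=\frac{s-uv}{\sqrt{(1-u^2)(1-v^2)}}.
\]
Since $\mu(x;s,\chi)$ is the pushforward of the above density under $(u,v)\mapsto x(u,v)$, the co-area formula gives
\[
\mu(x;s,\chi)=\int_{\{x(u,v)=x\}\cap[r-\delta,R]^2}\frac{[(1-u^2)(1-v^2)(1-x^2)]^{(n-4)/2}}{|\nabla x(u,v)|}\,d\ell(u,v),
\]
up to a constant from the sphere measure which is irrelevant downstream, since only $\sign\int g(x)\mu(x;s,\chi)\,dx$ is used.

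\textbf{Localization at the symmetric point.} On the curve $\{x(u,v)=x\}$ the factor $(1-x^2)^{(n-4)/2}$ is constant. The point $u=v=u_0:=\sqrt{\tfrac{s-x}{1-x}}$ lies on the curve, with $1-u_0^2=\tfrac{1-s}{1-x}$, and by symmetry both $\nabla x$ and $\nabla\big((1-u^2)(1-v^2)\big)$ are parallel to $(1,1)$ there, so $u_0$ is a critical point of $(1-u^2)(1-v^2)$ restricted to the curve; consequently $\log[(1-u^2)(1-v^2)]$ has vanishing along-curve derivative at $(u_0,u_0)$. Because $|x-s'|=o(1/\sqrt n)$ forces $u_0-r=O(|x-s'|)=o(1/\sqrt n)$, the point $(u_0,u_0)$ lies in $[r-\delta,R]^2$ precisely when $\delta+u_0-r>0$; if $\delta+u_0-r\le 0$ the curve misses the box and $\mu(x;s,\chi)=0$, which is the role of the superscript $+$. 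When $\delta+u_0-r>0$, the curve is tangent to $(1,-1)$ at $(u_0,u_0)$, the only active box constraints nearby are $u\ge r-\delta$ and $v\ge r-\delta$ (the curve stays within $o(1/\sqrt n)$ of $(r,r)$, far from the faces $u,v=R$), and it meets the box in a single arc of length $L=2\sqrt2(\delta+u_0-r)(1+o(1))$. Since $L=o(1/\sqrt n)$ and the along-curve oscillation of $\log[(1-u^2)(1-v^2)]$ over the arc is $O(L^2)$, the $(n-4)/2$-th power varies across the arc by a factor $1+O(nL^2)=1+o(1)$, while $|\nabla x|$ is $(1+o(1))$-constant; hence
\[
\mu(x;s,\chi)=(1+o(1))\,2\sqrt2\,(\delta+u_0-r)^+\,\frac{[(1-u_0^2)^2(1-x^2)]^{(n-4)/2}}{|\nabla x(u_0,u_0)|}.
\]

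\textbf{Evaluating the ingredients.} A direct differentiation gives $|\nabla x(u_0,u_0)|=\sqrt2\,u_0(1-s)/(1-u_0^2)^2$, which combined with $1-u_0^2=\tfrac{1-s}{1-x}$ collapses the rational factors. For the exponential factor, the relations $u_0^2(1-x)=s-x$ and $r^2(1-s')=s-r^2$ (that is, $s'=\tfrac{s-r^2}{1-r^2}$) show that $\log\!\big((1-u_0^2)^2(1-x^2)\big)-\log\!\big(\tfrac{1-x^2}{x^2}(s-r^2)^2\big)$ vanishes at $x=s'$ and, after one differentiation in $x$, has linear part $-\tfrac{4r(u_0-r)}{s-r^2}$, so this difference equals $-\tfrac{4r(u_0-r)}{s-r^2}+O((x-s')^2)$. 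Multiplying by $(n-4)/2$ and using $u_0-r=o(1/\sqrt n)$ (so that both the $O((x-s')^2)$ term and the gap between $(n-4)/2$ and $n/2$ are $o(1)$ in the exponent) yields $[(1-u_0^2)^2(1-x^2)]^{(n-4)/2}=(1+o(1))\big(\tfrac{1-x^2}{x^2}(s-r^2)^2\big)^{(n-4)/2}e^{-2nr(u_0-r)/(s-r^2)}$. Substituting $u_0=\sqrt{\tfrac{s-x}{1-x}}$ and absorbing the remaining $(1+o(1))$-constant elementary factors gives the displayed formula.

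\textbf{Main obstacle.} The delicate part is the uniformity of the localization over the whole window $|x-s'|=o(1/\sqrt n)$: one must confirm that the intersection of the level curve with the box is a single arc of length $o(1/\sqrt n)$ through the symmetric point, so that the Gram-determinant factor (with exponent $\Theta(n)$) and the Jacobian are constant there up to $1+o(1)$, and one must handle the active-constraint bookkeeping cleanly, including the near-corner regime producing the $(\cdot)^+$. A secondary technical point is the exponential-correction identity, which is only a first-order Taylor match at $x=s'$ and must be paired with the $(n-4)/2$-th power so that the residual error remains $o(1)$ in the exponent.
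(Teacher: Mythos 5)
Your proposal follows essentially the same route as the paper: reduce to a line integral along the level curve $C_x=\{x(u,v)=x\}$, note that the dominant factor $(1-u^2)(1-v^2)$ is, on the curve, determined by $s-uv$, localize near the symmetric point $(u_0,u_0)$ with $u_0=\sqrt{(s-x)/(1-x)}$ using the window $|x-s'|=o(1/\sqrt n)$, compute the arc length $2\sqrt2(\delta+u_0-r)^+$, and extract the exponential $e^{-2nr(u_0-r)/(s-r^2)}$ by a first-order expansion in the small quantity $u_0-r=o(1/\sqrt n)$. Your presentation adds two nice touches: you phrase the localization via the fact that $(u_0,u_0)$ is a critical point of $(1-u^2)(1-v^2)$ restricted to $C_x$ (so the along-curve oscillation is $O(L^2)$ with $nL^2=o(1)$), whereas the paper simply substitutes $u=r+\tilde u$, $v=r+\tilde v$ and uses that $\tilde u+\tilde v$ is effectively constant along the relevant arc; and you carry out the exponential comparison as a Taylor match of $\log[(1-u_0^2)^2(1-x^2)]$ against $\log[(1-x^2)(s-r^2)^2/x^2]$ at $x=s'$, which is equivalent to the paper's direct expansion of $s-uv=(s-r^2)(1-\frac{r(\tilde u+\tilde v)+\tilde u\tilde v}{s-r^2})$. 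One small mismatch to flag: you insert the co-area Jacobian $1/|\nabla x|$ in the definition of the pushforward density, while the paper's proof (and hence the exact form of the displayed asymptotic, with no extra constant) takes $\mu(x;s,\chi)=\int_{C_x}\chi(u)\chi(v)\mu(u,v,s)\,dl$ without that factor. Since $|\nabla x(u_0,u_0)|=\sqrt2\,u_0(1-x)/(1-u_0^2)\to\sqrt2\,r(1-s')/(1-r^2)$ on the window, your version differs from the displayed formula by a nonzero constant factor, not merely a $1+o(1)$ factor; this is harmless downstream (only the sign of $\int g\,\mu$ is used, and the paper itself is loose about normalization), but your phrase "absorbing the remaining $(1+o(1))$-constant elementary factors" glosses over the fact that the residual factor is a genuine constant rather than $1+o(1)$, and thus the literal statement of the proposition matches only under the paper's (Jacobian-free) convention.
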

\begin{proof}
Let $C_x:=C_{x,s}$. Note that
\[x=\frac{s-uv}{\sqrt{(1-u^2)(1-v^2)}}\] 
and
\[
\mu(u,v,s)=\left((1-x^2)(1-u^2)(1-v^2)\right)^{\frac{n-4}{2}}.
\]
Furthermore,
\[\left(\frac{\partial x}{\partial u}\right)^2+\left(\frac{\partial x}{\partial v}\right)^2=\frac{(us-v)^2}{(1-v^2)(1-u^2)^3}+\frac{(vs-u)^2}{(1-u^2)(1-v^2)^3}=2\frac{r^2(1-s)^2}{(1-r^2)^4}+o(1).\]
Hence, up to a positive constant multiple depending on $n,s,R,r-\delta$,
\begin{eqnarray*}
\mu(x;s,\chi)&=&\left(\frac{(1-r^2)^2}{\sqrt{2}r(1-s)}+o(1)\right)\int_{C_x}\chi(u)\chi(v)\mu(u,v,s) dl
\\
&=&\left(\frac{(1-r^2)^2}{\sqrt{2}r(1-s)}+o(1)\right)(1-x^2)^{\frac{n-4}{2}}\int_{C_x}\chi(u)\chi(v)\left((1-u^2)(1-v^2)\right)^{\frac{n-4}{2}} dl
\\
&=&\left(\frac{(1-r^2)^2}{\sqrt{2}r(1-s)}+o(1)\right)(1-x^2)^{\frac{n-4}{2}}\int_{C_x}\chi(u)\chi(v)\left(\frac{s-uv}{x}\right)^{{n-4}} dl
\\
&=&\left(\frac{(1-r^2)^2}{\sqrt{2}r(1-s)}+o(1)\right)\left(\frac{1-x^2}{x^2}\right)^{\frac{n-4}{2}}\int_{C_x}\chi(u)\chi(v)\left(s-uv\right)^{{n-4}} dl
\end{eqnarray*}
Suppose that $r\leq u,v\leq R$. Then, by the definition of $R$, the equality
\[s'=\frac{s-uv}{\sqrt{(1-u^2)(1-v^2)}}\]
occurs when $(u,v)\in\{(r,R),(R,r),(r,r)\}$. Furthermore, when $|x-s'|=o\left(\frac{1}{\sqrt{n}}\right)$, we must have $(u,v)\in\{(r,R),(R,r),(r,r)\}$ up to $o\left(\frac{1}{\sqrt{n}}\right)$. Since the integrand $\chi(u)\chi(v)(s-uv)^{n-4}$ of the last integral above is exponentially larger when $u,v=r+o\left(\frac{1}{\sqrt{n}}\right)$, the main contribution of the integral comes when $u$ and $v$ are near $r$ up to $o\left(\frac{1}{\sqrt{n}}\right)$. Writing $u=r+\tilde{u}$ and $v=r+\tilde{v},$ where $\tilde{u}, \tilde{v}=o(\frac{1}{\sqrt{n}}),$ we have
\[
s-uv=s-r^2-r(\tilde{u}+\tilde{v})-\tilde{u}\tilde{v}= (s-r^2)\left(1-\frac{r(\tilde{u}+\tilde{v})+\tilde{u}\tilde{v}}{ s-r^2}\right).
\]
Hence, up to a positive constant multiple depending on $n,s,R,r-\delta$,
\[
\mu(x;s,\chi)=\left(\frac{(1-r^2)^2}{\sqrt{2}r(1-s)}+o(1)\right)\left(\left(\frac{1-x^2}{x^2}\right) \left( s-r^2\right)^2\right)^{\frac{n-4}{2}}\int_{C_x}\chi(u)\chi(v)\left(1-\frac{r(\tilde{u}+\tilde{v})+\tilde{u}\tilde{v}}{ s-r^2}\right)^{{n-4}} dl.
\]
Recall the following inequalities, which follow easily from the Taylor expansion of $\log(1+x)$
\[
\begin{split}
e^{a-\frac{a^2}{n}} \leq (1+\frac{a}{n})^n\leq e^a
\end{split}
\]
for $|a|\leq n/2.$ We apply the above inequalities  to estimate  the integral, and obtain
\[
\int_{C_x}\chi(u)\chi(v)\left(1-\frac{r(\tilde{u}+\tilde{v})+\tilde{u}\tilde{v}}{ s-r^2}\right)^{{n-4}} dl=\left(1+o(1)\right) \int_{C_x}e^{\left(-\frac{nr(\tilde{u}+\tilde{v})}{ s-r^2}  \right)}\chi(u)\chi(v) dl.
\]
We approximate the curve $C_x$ with  the following  line 
 \[\tilde{u}+\tilde{v}= 2\left(\sqrt{\frac{s-x}{1-x}}-r \right)+o\left(\frac{1}{n}\right).\]
It follows that
\[
\int_{C_x}e^{\left(-\frac{nr(\tilde{u}+\tilde{v})}{ s-r^2}  \right)}\chi(u)\chi(v) dl =(1+o(1))2\sqrt{2}\left(\delta+\sqrt{\frac{s-x}{1-x}}-r  \right)^+e^{\left(-\frac{2nr\left(\sqrt{\frac{s-x}{1-x}}-r \right)}{ s-r^2}  \right)},
\]
from which the conclusion follows.

\end{proof}
\subsection{Conditional density for sphere packings}
Let  $s=\cos(\theta)$ and $r=\frac{1}{\sqrt{2(1-s)}}$, where $\frac{1}{3}\leq s\leq \frac{1}{2}$. Let $0<\delta=\frac{c_1}{n}$ for some fixed $c_1>0$ that we specify later, and define 
\[
\chi(y)
:=\begin{cases}
1 &\text{ for } 0 \leq y\leq r+\delta,
\\
0 &\text{otherwise}.
\end{cases}
\]
Let $\boldsymbol{a},\boldsymbol{b}$  be two randomly independently chosen points on $\mathbb{R}^{n}$ with respect to the Euclidean measure such that $|\boldsymbol{a}|,|\boldsymbol{b}|\leq r+\delta,$ where $|.|$ is the Euclidean norm. Let
%Let $\alpha:=\arccos\langle \boldsymbol{a},\boldsymbol{b} \rangle,$  
\[U:=|\boldsymbol{a}|,\]
\[V:=|\boldsymbol{b}|,\]
\[T:=|\boldsymbol{a}-\boldsymbol{b}|,\]
and $\alpha$ be the angle between $\boldsymbol{a}$ and $\boldsymbol{b}$.
The pushforward of the product measure on $\mathbb{R}^{n}\times\mathbb{R}^{n}$ onto the coordinates $(U,V,\alpha)$ is, up to a positive scalar depending only on $n$, the measure  
\[
\mu(U,V,\alpha) dUdVd\alpha=U^{n-1}V^{n-1}\sin(\alpha)^{n-3}dUdVd\alpha.
\]
Let 
\begin{equation}\label{cosin}
x:=\cos \alpha=\frac{U^2+V^2-T^2}{2UV},
\end{equation}
which follows from the cosine law. We have
\[
\sin \alpha d\alpha = \frac{T}{UV}dT- \frac{\partial \frac{U^2+V^2-T^2}{2UV}}{\partial U} dU - \frac{\partial \frac{U^2+V^2-T^2}{2UV}}{\partial V} dV .
\]
Hence, 
the pushforward of the product measure on $\mathbb{R}^{n}\times\mathbb{R}^{n}$ onto the coordinates $(U,V,T)\in\mathbb{R}^3$ has the following density function up to a positive scalar depending only on $n$:
\[
\mu(U,V,T)= (U^2V^2T)\Delta(U,V,T)^{n-4},
\]
where $\Delta(U,V,T)$ is the Euclidean area of the triangle with sides $U,V,T$. If no such triangle exists, then $\Delta(U,V,T)=0$.
 Let $\mu(x;T,\chi)$ be  the induced density function on $x$ subjected to the conditions of fixed $T,$ and $  U,V\leq r+\delta$. Precisely, we have, up to a positive constant multiple depending on $n, r+\delta$ and $T$, that
\[
\mu(x;T,\chi):=\int_{C_{x,T}}\chi(U)\chi(V)\mu(x,l,T)dl,
\]
where the integral is over the curve $C_{x,T}\subset \mathbb{R}^2$ that is given by $\frac{U^2+V^2-T^2}{2UV}=x$ and $dl$ is the induced Euclidean metric on $C_{x,T},$ and 
\[
\mu(x,l,T) dxdldT= \mu(U,V,T)dUdVdT.
\]

\begin{figure}[h!]
	\centering
		\includegraphics[width=50mm,scale=0.5]{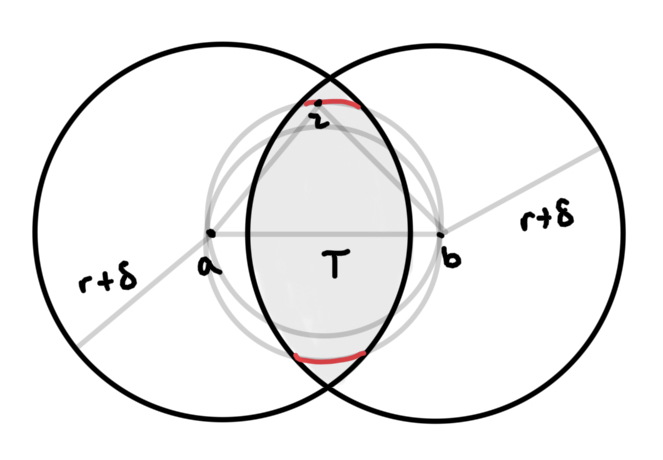}
	\caption{$\boldsymbol{a}$ and $\boldsymbol{b}$ are two points with $T=|\boldsymbol{a}-\boldsymbol{b}|$. The red curve is the locus of points $\boldsymbol{z}$ having $(U,V)$ on the curve $C_{x,T}$, that is, $\boldsymbol{z}$ with respect to which $\boldsymbol{a}$ and $\boldsymbol{b}$ have angle whose cosine is $x$. Shaded region corresponds to the support of $\chi(U)\chi(V)$.}\label{curve2}
\end{figure}

We have  
\[
dU dV dT= \frac{1}{\sqrt{(\frac{\partial x}{\partial U})^2+(\frac{\partial x}{\partial V} )^2}} dx dl dT,
\]
and
\[\left(\frac{\partial x}{\partial U}\right)^2+\left(\frac{\partial x}{\partial V} \right)^2= \left(\frac{T}{UV}\right)^2(1+x^2)-\frac{2x(1-x^2)}{UV}.\]
Hence,
\[
\mu(x;T,\chi)=\int_{C_{x,T}}\frac{\chi(U)\chi(V)\mu(U,V,T)}{\sqrt{\left(\frac{T}{UV}\right)^2(1+x^2)-\frac{2x(1-x^2)}{UV}}} dl
\]
up to a positive constant multiple depending on $n, r+\delta$ and $T$.\\
\\
From Lemma~\ref{57}, equation~\eqref{HTfunct} with $F=\chi$ and any given $g$ may be written as
\begin{equation}\label{Htdensity}
H(T)=\int_{-1}^1g(x)\mu(x;T,\chi)dx.
\end{equation}
We now study the scaling property of $\mu(x;T,\chi).$ Let $\chi_{T}(x):=\chi(\frac{x}{T}).$
\begin{lemma}\label{scalinglemma}We have
\[
\mu(x;T,\chi)=T^{2n-1}\mu(x;1,\chi_{T}).
\]
\end{lemma}
 \begin{proof}
 Note that 
 \[
 \mu(U,V,T)= T^{2n-3}\mu\left(\frac{U}{T},\frac{V}{T},1\right),
 \]
 and $x$ is invariant by scaling  $U,V,T.$ The conclusion follows. 
 \end{proof}

 Let $\mu(x;\chi):=\mu(x;1,\chi),$ and 
 \[
 r_1:=\sqrt{1-(1-x^2)(r+\delta)^2}+x(r+\delta).
 \]
\begin{proposition}\label{triple0}
Suppose that $|x-s|\leq \frac{c_2}{n}$, $x<1/2$ and $\delta=\frac{c_1}{n}$.
We have
\[
\mu(x;\chi)=(1+E)(1-x^2)^{\frac{n-4}{2}} \left( \frac{1}{2(1-x)}\right)^{{n-1}} \frac{1}{\sqrt{1-x}}\sqrt{1+\left(x-\frac{(1-x^2)r}{\sqrt{1-(1-x^2)r^2}}\right)^2}\left(r+\delta- r_1  \right)^+
\]
up to a positive scalar multiple making this a probability measure on $[-1,1]$, and where $E$ is a function of $x$ with the uniform bound $|E|<\frac{(4c_2+2c_1+2)^2}{n}$ for $n\geq 2000$.
\end{proposition}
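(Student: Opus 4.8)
The plan is to obtain the conditional law of $(U,V)$ given $T=1$, push it forward onto the variable $x$, and keep careful track of precisely those factors that are exponentially sensitive in $n$ while absorbing everything else into the $n$-dependent normalising scalar and into the multiplicative error $1+E$. From the stated density $\mu(U,V,T)\propto UVT\,\Delta(U,V,T)^{n-4}$ together with the elementary fact that a triangle with sides $U,V,1$ and included angle $\arccos x$ has area $\Delta(U,V,1)=\tfrac12 UV\sqrt{1-x^2}$, the conditional density of $(U,V)$ given $T=1$ is proportional to $(UV)^{n-3}(1-x^2)^{\frac{n-4}{2}}$ on $\{\,0\le U,V\le\overline{r}+\delta,\ |U-V|<1<U+V\,\}$, where $x=x(U,V):=\frac{U^2+V^2-1}{2UV}$.

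The level set $\{x(U,V)=x\}$ is an arc of the ellipse $U^2-2xUV+V^2=1$; since the quadratic form $U^2-2xUV+V^2$ has eigenvalue $1-x$ in the direction $(1,1)$, the part of this ellipse in the open first quadrant is an arc whose farthest point from the origin is the tip $(w_0,w_0)$ with $w_0:=\frac{1}{\sqrt{2(1-x)}}$. The first thing to check is that this arc meets the square $[0,\overline{r}+\delta]^2$ exactly when $r_1<\overline{r}+\delta$, the intersection then being $\{(U,V(U)):r_1\le U\le\overline{r}+\delta\}$ with $V(U)=xU+\sqrt{1-(1-x^2)U^2}$ (the branch on which $V-xU=\sqrt{1-(1-x^2)U^2}\ge0$); when $r_1\ge\overline{r}+\delta$ the intersection is empty and $\mu(x;\chi)=0$, which accounts for the $(\,\cdot\,)^{+}$ in the statement. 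Here one uses $r_1(w_0)=w_0$ and $\partial_w r_1|_{w=w_0}=-1$, so that near $w_0$ the inner radius $r_1=r_1(\overline{r}+\delta)$ and the width $\overline{r}+\delta-r_1$ behave transparently.

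Using $\partial x/\partial V=\frac{V-xU}{UV}$ on a level set (and the analogous formula for $\partial x/\partial U$), the change of variables $(U,V)\mapsto(x,U)$ gives a density in $(x,U)$ proportional to $(UV)^{n-3}(1-x^2)^{\frac{n-4}{2}}\cdot\frac{UV}{\sqrt{1-(1-x^2)U^2}}$, and integrating out $U$ yields
\[
\mu(x;\chi)\ \propto\ (1-x^2)^{\frac{n-4}{2}}\int_{r_1}^{\overline{r}+\delta}(UV)^{n-3}\,\frac{UV\,dU}{\sqrt{1-(1-x^2)U^2}}.
\]
Since $|x-s'|\le c_2/n$ forces $w_0=\overline{r}+O(1/n)$, the whole arc lies within $O(1/n)$ of $(\overline{r},\overline{r})$, with explicit bounds obtained by expanding $r_1$ about $(x,\delta)=(s',0)$, where $\partial_\delta r_1=-1+O(1/n)$ and $|\partial_x r_1|=\frac{1}{\sqrt2(1-s')^{3/2}}\le2$ for $1/3\le s'<1/2$. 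The decisive structural point is that the map $w\mapsto\frac{1}{2(1-x)}-w\,r_1(w)$ vanishes to \emph{second} order at $w=w_0$; equivalently $UV=\frac{1}{2(1-x)}-\frac14(U-V)^2+O(1/n^2)$ on the ellipse, so $UV=\frac{1}{2(1-x)}\bigl(1+O(1/n^2)\bigr)$ uniformly along the arc and hence $(UV)^{n-3}=\bigl(\frac{1}{2(1-x)}\bigr)^{n-3}\bigl(1+O(1/n)\bigr)$ — this is where the one remaining exponentially sensitive factor is pinned down. The factor $\frac{UV}{\sqrt{1-(1-x^2)U^2}}$ is $O(1/n)$-close to its value at $U=\overline{r}$, namely $\frac{\overline{r}^2}{\sqrt{1-(1-x^2)\overline{r}^2}}$, which agrees with $\sqrt{1+\bigl(x-\frac{(1-x^2)\overline{r}}{\sqrt{1-(1-x^2)\overline{r}^2}}\bigr)^2}$ up to a constant depending only on $s'$, while $\int_{r_1}^{\overline{r}+\delta}dU=\overline{r}+\delta-r_1$. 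Collecting these estimates and absorbing all $n$-only and $s'$-only constants into the normalising scalar produces the displayed formula; note that $(1-x^2)^{\frac{n-4}{2}}$ and $\bigl(\frac{1}{2(1-x)}\bigr)^{n-3}$ must be kept exactly, as they are exponentially sensitive to the $O(1/n)$ wobble of $x$, whereas every factor that was replaced is slowly varying.

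The last step is purely explicit bookkeeping: each $O(1/n)$ above must be bounded by a concrete constant — the $(n-3)\cdot O((\text{arc length})^2)$ error in $(UV)^{n-3}$, the first-order Taylor remainder of $\frac{UV}{\sqrt{1-(1-x^2)U^2}}$ over $[r_1,\overline{r}+\delta]$, and the error from replacing $x$ by $s'$ in the slowly varying prefactor — and these are combined, using $1/3\le s'<1/2$, $0<c_1<1$, $0<c_2<2.2$ and $n\ge2000$, into the single bound $|E|<\frac{(4c_2+2c_1+2)^2}{n}$. I expect this explicit error accounting to be the most laborious part of the argument; the one genuinely delicate conceptual ingredient is the second-order vanishing of $w\mapsto\frac{1}{2(1-x)}-w\,r_1(w)$ at $w=w_0$, without which $(UV)^{n-3}$ would vary by an $O(1)$ multiplicative factor across the arc and the claimed $(1+O(1/n))$ precision would be destroyed.
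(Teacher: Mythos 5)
Your approach is correct in outline and in fact technically cleaner than the paper's on one point. You pass from the joint density $\propto(UV)^{n-3}(1-x^2)^{\frac{n-4}{2}}$ of $(U,V)$ given $T=1$ to the density of $x$ by the genuine change of variables $(U,V)\mapsto(x,U)$, with Jacobian $|\partial x/\partial V|^{-1}=\frac{UV}{\sqrt{1-(1-x^2)U^2}}$; this is the co-area formula done properly, and it produces an extra factor of $UV$ and the slowly varying prefactor $\frac{\overline r^2}{\sqrt{1-(1-x^2)\overline r^2}}$. The paper instead integrates $\chi(U)\chi(V)\mu(U,V,1)$ along $C_x$ against arc length $dl$ (parametrizing by $V$), which omits the $1/|\nabla x|$ factor the co-area formula requires, and so lands on the arc-length prefactor $\sqrt{1+(x-\frac{(1-x^2)\overline r}{\sqrt{1-(1-x^2)\overline r^2}})^2}$ and the power $n-3$ rather than $n-2$. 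The two only agree because the discrepancy, a factor $|\nabla x|=\frac{\sqrt{1+x^2-2x(1-x^2)UV}}{UV}=2(1-s')^{3/2}(1+O(1/n))$ on the arc, is slowly varying and is swallowed by both the "up to a positive scalar multiple" and by $E$; so to reproduce the proposition \emph{exactly as printed} you would still need one line converting your prefactor to the stated one (they differ by a constant depending on $s'$ times $1+O(c_2/n)$), or else just prove the equivalent variant with your prefactor, which is equally usable downstream. Your exact identity $UV=\frac{1-(U-V)^2}{2(1-x)}$ makes the decisive second-order vanishing transparent --- the paper reaches the same conclusion via a Taylor estimate for $\tilde U+\tilde V$ --- and your observations $r_1(w_0)=w_0$, $\partial_w r_1|_{w_0}=-1$ are correct. (Minor slip: the coefficient of $(U-V)^2$ in your expansion of $UV$ is $\frac{1}{2(1-x)}$, not $\frac14$, though this is harmless for the $O(1/n^2)$ conclusion.)

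The one genuine gap is the one you flag yourself: the explicit error accounting that turns the several $O(1/n)$ pieces into $|E|<\frac{(4c_2+2c_1+2)^2}{n}$ for $n\geq2000$. The paper does this by first bounding $\tilde U,\tilde V\in\bigl(-\frac{c_1+2c_2}{n},\frac{c_1}{n}\bigr]$ via $\tilde U+\tilde V=\frac{1}{2\overline r(1-x)}-\overline r+O(1/n^2)$ and the crude estimate $2\overline r(1-x)^2>\frac12$, then recording the individual errors $|E_2'|\leq\frac{4(c_1+2c_2)^2}{n}$ and $|E_3|\leq\frac{6(c_1+2c_2)}{n}$ and compiling. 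Your route would incur an additional $O(c_2/n)$ from freezing your slowly varying prefactor at $s'$, which should fit within the slack of $(4c_2+2c_1+2)^2$, but until that bookkeeping is actually carried out the stated constant in the error bound remains unverified.
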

\begin{proof}
We have 
\[
\mu(x;\chi)=\int_{C_x}\chi(U)\chi(V)\frac{\mu(U,V,1)}{{\sqrt{\frac{1+x^2}{\left(UV\right)^2}-\frac{2x(1-x^2)}{UV}}} } dl,
\]
where  $C_x:=C_{x,1}.$  We note that up to a positive scalar depending only on $n$,
\[
\mu(U,V,1)=U^2V^2\left((1-x^2)U^2V^2\right)^{\frac{n-4}{2}}.
\]
Hence,
\[
\begin{split}
\mu(x;\chi)=(1-x^2)^{\frac{n-4}{2}}\int_{C_x}\chi(U)\chi(V) \frac{\left(UV\right)^{n-1}}{{\sqrt{1+x^2-2x(1-x^2)UV}}} dl.
\end{split}
\]
Suppose that $U$ and $V$ are in the support of $\chi.$  By concentration of mass, we may assume that $U=r+\tilde{U}$ and $V=r+\tilde{V},$  with  $\tilde{U}, \tilde{V}\leq \frac{c_1}{n}.$ From~\eqref{cosin}, 
we have
 \[\tilde{U}+\tilde{V}= \frac{1}{2r(1-x)}-r+E_1,\] 
where 
\[
E_1:= \frac{1}{2r(1-x)}\left(2x\tilde{U}\tilde{V} -\tilde{U}^2-\tilde{V}^2  \right).
\]
Since $|x-s|\leq \frac{c_2}{n},$ and $\frac{1}{2}<2r(1-x)^2,$ it follows that  $-\frac{c_1+2c_2}{n}< \tilde{U}, \tilde{V}\leq \frac{c_1}{n}$ for $n\geq 2000$.  Hence, 
\[
|E_1|\leq 3\frac{(c_1+2c_2)^2}{n^2}.
\]
We have
\[
UV=r^2+r(\tilde{U}+\tilde{V})+\tilde{U}\tilde{V}= r^2\left(1+\frac{\tilde{U}+\tilde{V}}{r}\right)+\tilde{U}\tilde{V}=  \frac{1}{2(1-x)}+E_2,
\]
where $|E_2|< 4\frac{(c_1+2c_2)^2}{n^2}.$ Furthermore,  
\[
\sqrt{1+x^2-2x(1-x^2)UV}= \sqrt{1-x-2x(1-x^2)E_2}=\sqrt{1-x}+E_2',
\]
where $|E_2'|<|E_2|.$ Hence,
\[
\begin{split}
\mu(x;\chi)&=(1-x^2)^{\frac{n-4}{2}} \left( \frac{1}{2(1-x)}\right)^{{n-1}}  \frac{1}{\sqrt{1-x}+E_2'} \left( 1+\frac{E_2}{2(1-x)}\right)^{n-1} \int_{C_x} \chi(U)\chi(V) dl.
\end{split}
\]
Note that 
\[
 \frac{1}{\sqrt{1-x}+E_2'} \left( 1+\frac{E_2}{2(1-x)}\right)^{n-1} =\frac{1}{\sqrt{1-x}}(1+E_2''),
\]
where $E_2''\leq  4\frac{(c_1+2c_2)^2}{n}$ for $n\geq 2000$. We parametrize the curve $C_x$ with $V$ to obtain
\[
U(V)=\sqrt{1-(1-x^2)V^2}+xV.
\]
We have 
\[
\frac{dU}{dV}=x-\frac{(1-x^2)V}{\sqrt{1-(1-x^2)V^2}}=x-\frac{(1-x^2)r}{\sqrt{1-(1-x^2)r^2}}+E_3,
\]
where 
\[|E_3|=\left|(V-r) \frac{(1-x^2)}{(1-(1-x^2)V_1^2)^{3/2}}\right| \] for some $V_1\in\left(r-\frac{c_1+2c_2}{n},r+\frac{c_1}{n}\right),$ which implies  $\left|\frac{(1-x^2)}{(1-(1-x^2)V_1^2)^{3/2}}\right|<6.$ Hence,
\[
|E_3|\leq 6\left(\frac{c_1+2c_2}{n}\right).
\]
Hence,
\[
\begin{split}
\int_{C_x} \chi(U)\chi(V) dl&=^+\int_{r_1}^{r+\delta}\sqrt{1+\left(\frac{dU}{dV}\right)^2} dV
\\&=\sqrt{1+\left(x-\frac{(1-x^2)r}{\sqrt{1-(1-x^2)r^2}}\right)^2}\left(r+\delta- r_1  \right)^+ (1+\bar{E_3})
\end{split}
\]
where 
\[
r_1:=\sqrt{1-(1-x^2)(r+\delta)^2}+x(r+\delta),
\]
and 
\[ |E_3|\leq 6\left(\frac{c_1+2c_2}{n}\right).\]
The first equality $=^+$ above means equal to $0$ if $r+\delta<r_1$. Therefore,
\[
\mu(x;\chi)=(1+E)(1-x^2)^{\frac{n-4}{2}} \left( \frac{1}{2(1-x)}\right)^{{n-1}}\frac{1}{\sqrt{1-x}} \sqrt{1+\left(x-\frac{(1-x^2)r}{\sqrt{1-(1-x^2)r^2}}\right)^2}\left(r+\delta- r_1  \right)^+
\]
up to a positive scalar multiple, where $|E|<\frac{(4c_2+2c_1+2)^2}{n}$ for $n\geq 2000$.
This completes the proof of our Proposition.
\end{proof}

\section{Comparison with previous bounds}\label{Comparison}
We define Jacobi polynomials, state some of their properties, and prove a local approximation result for Jacobi polynomials in Subsection~\ref{local}. In Subsection~\ref{improvingsphericalcodes}, we improve bounds on $\theta$-spherical codes. In Subsection~\ref{improvingspherepackings}, we improve upper bounds on sphere packing densities. The general constructions of our test functions were provided in Section~\ref{newtest}. For each of our main theorems, we determine the largest values of $\delta=O(1/n)$ measuring the extent to which the supports of the characteristic functions $\chi$ in Propositions~\ref{lift} and~\ref{liftsphere} could be enlarged. See the end of Section~\ref{newtest} for an outline of the general strategy.\\

\subsection{Jacobi polynomials and their local approximation}\label{local}
Recall  definition \eqref{defmlev} of $M_{\textup{Lev}}(n,\theta)$. Levenshtein proved  inequality~\eqref{levinq}  by applying Delsarte's linear programming bound to a family of even and odd degree polynomials inside $D(d\mu_{\frac{n-3}{2}},\cos\theta),$ which we now discuss. In order to define these Levenshtein polynomials, we record some well-known properties of Jacobi polynomials (see~\cite[Chapter IV]{Gabor}) that we will also use in the rest of the paper.\\
\\
We denote by $p^{\alpha,\beta}_{d}(t)$ Jacobi polynomials of degree $d$ with parameters $\alpha$ and $\beta$. These are orthogonal polynomials with respect to the probability measure 
\[d\mu_{\alpha,\beta}:=\frac{(1-t)^{\alpha}(1+t)^{\beta}dt}{\int_{-1}^1(1-t)^{\alpha}(1+t)^{\beta}dt}\] on the interval $[-1,1]$ with the normalization that gives
\[p^{\alpha,\beta}_d(1)=\binom{d+\alpha}{d}.\]
 $p^{\alpha,\beta}_{d}(t)$ has $n$ simple real roots $t_{1,d}^{\alpha,\beta} > t_{2,d}^{\alpha,\beta}> \dots >t_{d,d}^{\alpha,\beta}.$ When $\alpha=\beta$, we denote the measure $d\mu_{\alpha,\alpha}$ simply as $d\mu_{\alpha}$. 
\begin{equation}\label{der}
\frac{d}{dt}p_d^{\alpha,\beta}(t)=\frac{d+\alpha+\beta+1}{2}p_{d-1}^{\alpha+1,\beta+1}(t).
\end{equation}
When proving our local approximation result on Levenshtein's optimal polynomials in the rest of this subsection, we use the fact that the Jacobi polynomial $p_d^{\alpha,\beta}(t)$ satisfies the differential equation
\begin{equation}\label{diffeq}(1-t^2)x''(t)+(\beta-\alpha-(\alpha+\beta+2)t)x'(t)+d(d+\alpha+\beta+1)x(t)=0.
\end{equation}
By~\cite[Lemma 5.89]{Levenshtein},
\[
t^{\alpha+1,\alpha+1}_{1,d-1}<t^{\alpha+1,\alpha}_{1,d}<t^{\alpha+1,\alpha+1}_{1,d}.
\]
It is also well-known that for fixed $\alpha$, $t^{\alpha+1,\alpha+1}_{1,d}\rightarrow 1$ as $d\rightarrow\infty$. Henceforth, let $d:=d(n,\theta)$ be uniquely determined by $t^{\alpha+1,\alpha+1}_{1,d-1}< \cos(\theta) \leq t^{\alpha+1,\alpha+1}_{1,d}$.
Let~\cite[Lemma 5.38]{Levenshtein}
\begin{equation}\label{defg}
g_{n,\theta}(x)=\begin{cases}
\frac{(x+1)^2}{(x-t^{\alpha+1,\alpha+1}_{1,d})}\left(  p^{\alpha+1,\alpha+1}_{d}(x) \right)^2  &\text{ if } t^{\alpha+1,\alpha}_{1,d}< \cos(\theta) \leq t^{\alpha+1,\alpha+1}_{1,d} ,
\\
\frac{(x+1)}{(x-t^{\alpha+1,\alpha}_{1,d})}\left(  p^{\alpha+1,\alpha}_{d}(x) \right)^2  &\text{ if } t^{\alpha+1,\alpha+1}_{1,d-1}< \cos(\theta) \leq t^{\alpha+1,\alpha}_{1,d},
\end{cases}
\end{equation}
where $\alpha:=\frac{n-3}{2}$ in our case. Levenshtein proved that $g_{n,\theta}\in D(d\mu_{\frac{n-3}{2}},\cos\theta),$ and 
\[
\cal{L}(g_{n,\theta})=M_{\textup{Lev}}(n,\theta).
\] 
By~\eqref{Dbound}, this gives 
\[
M(n,\theta)\leq M_{\textup{Lev}}(n,\theta).
\] 
As part of our proofs of our main theorems in the next subsections, we need to determine local approximations to Jacobi polynomials $p_d^{\alpha,\beta}$ in the neighbourhood of points $s\in (-1,1)$ such that $s\geq t_{1,d}^{\alpha,\beta}$. This is obtained using the behaviour of the zeros of Jacobi polynomials. Using this, we obtain suitable local approximations of Levenshtein's optimal functions near $s$.

\begin{proposition}\label{prop:linapproximation}Suppose  that $\alpha\geq \beta\geq 0,$ $|\alpha-\beta|\leq 1$, $d\geq0$ and    $s\in [t_{1,d}^{\alpha,\beta},1)$. Then, we have
\[p_d^{\alpha,\beta}(t)=p_d^{\alpha,\beta}(s)+(t-s)\frac{dp_d^{\alpha,\beta}}{dt}(s)(1+A(t)),\]
where,
\[|A(t)|\leq \frac{e^{\sigma(t)}-1}{\sigma(t)}-1\]
with
\[\sigma(t):=\frac{|t-s|\left(2\alpha s+2s+1\right)}{1-s^2}.\]
\end{proposition}
\begin{proof}
Consider the Taylor expansion
\[p_d^{\alpha,\beta}(t)=\sum_{k=0}^{\infty}\frac{(t-s)^k}{k!}\frac{d^kp_d^{\alpha,\beta}}{dt^k}(s)\]
of $p_d^{\alpha,\beta}$ centered at $s$. We prove the proposition by showing that for $s\in [t_{1,d}^{\alpha,\beta},1)$, the higher degree terms in the Taylor expansion are small in comparison to the linear term. Indeed, suppose $k\geq 1$. Then, using equation~\eqref{der},
\[\frac{(d^{k+1}/dt^{k+1})p_d^{\alpha,\beta}(s)}{(d^k/dt^k)p_d^{\alpha,\beta}(s)}=\frac{(d/dt)p_{d-k}^{\alpha+k,\beta+k}(s)}{p_{d-k}^{\alpha+k,\beta+k}(s)}=\sum_{i=1}^{d-k}\frac{1}{s-t_{i,d-k}^{\alpha+k,\beta+k}}\leq \sum_{i=1}^{d-1}\frac{1}{s-t_{i,d-1}^{\alpha+1,\beta+1}},\]
where the last inequality follows from the fact that the roots of a Jacobi polynomial interlace with those of its derivative. However, the last quantity is equal to $\frac{(d^2/dt^2)p_d^{\alpha,\beta}(s)}{(d/dt)p_d^{\alpha,\beta}(s)}$. We proceed to show that
\begin{equation}\label{prelimineq}\frac{(d^2/dt^2)p_d^{\alpha,\beta}(s)}{(d/dt)p_d^{\alpha,\beta}(s)}\leq \frac{2\alpha s+2s+1}{1-s^2}.
\end{equation}
Indeed, we know from the differential equation~\eqref{diffeq} that
\begin{equation}(1-s^2)(d^2/dt^2)p_d^{\alpha,\beta}(s)+(\beta-\alpha-(\alpha+\beta+2)s)(d/dt)p_d^{\alpha,\beta}(s)+d(d+\alpha+\beta+1)p_d^{\alpha,\beta}(s)=0.
\end{equation}
However, since $s$ is to the right of the largest root of $p_d^{\alpha,\beta}$, $p_d^{\alpha,\beta}(s)\geq 0$. Therefore,
\[(1-s^2)(d^2/dt^2)p_d^{\alpha,\beta}(s)+(\beta-\alpha-(\alpha+\beta+2)s)(d/dt)p_d^{\alpha,\beta}(s)\leq 0,\]
from which inequality~\eqref{prelimineq} follows. As a result,  the degree $k+1$ term compares to the linear term as
\begin{eqnarray*}\frac{(d^{k+1}/dt^{k+1})p_d^{\alpha,\beta}(s)}{(d/dt)p_d^{\alpha,\beta}(s)}\frac{|t-s|^k}{(k+1)!}&\leq& \frac{|t-s|^k}{(k+1)!}\left( \frac{2(\alpha+1)s+1}{1-s^2}\right)^k\end{eqnarray*}
Consequently, for every $k\geq 1$,
\[
\frac{|t-s|^{k+1}}{(k+1)!}(d^{k+1}/dt^{k+1})p_d^{\alpha,\beta}(s)\leq |t-s|(d/dt)p_d^{\alpha,\beta}(s)\frac{\left( \frac{|t-s|\left(2\alpha+2s+1\right)}{1-s^2}\right)^k}{(k+1)!}
\]
As a result, we obtain that
\[p_d^{\alpha,\beta}(t)=p_d^{\alpha,\beta}(s)+(t-s)\frac{dp_d^{\alpha,\beta}}{dt}(s)(1+A(t)),\]
where
\[|A(t)|\leq \frac{e^{\sigma(t)}-1}{\sigma(t)}-1\]
with
\[\sigma(t)=\frac{|t-s|\left(2\alpha s+2s+1\right)}{1-s^2}.\]
\end{proof}

\subsection{Improving spherical codes bound}\label{improvingsphericalcodes}
We prove a stronger version of Theorem~\ref{mainbound} that we now state.
  
\begin{theorem} \label{Thm1general}Fix $\theta<\theta^*$ and suppose $0<\theta<\theta'\leq\pi/2$. Then there is a function $h\in D(d\mu_{\frac{n-3}{2}},\cos\theta)$ such that 
\[\cal{L}(h) \leq  c_n \frac{M_{\textup{Lev}}(n-1,\theta')}{\lambda_n(\theta,\theta')},\]
where $c_n\leq 0.4325$ for large enough $n$ independent of $\theta$ and $\theta'.$
\end{theorem}
\begin{proof}Without loss of generality, we may assume that  $\cos(\theta^{\prime})=t^{\alpha+1,\alpha+\varepsilon}_{1,d}$ for some $\varepsilon\in\{0,1\}$ and $\alpha:=\frac{n-4}{2}.$ Indeed, recall from Subsection~\ref{local} that $d$ is uniquely determined by $t^{\alpha+1,\alpha+1}_{1,d-1}< \cos(\theta') \leq t^{\alpha+1,\alpha+1}_{1,d}$,  
\begin{equation*}
g(x)=\begin{cases}
\frac{(x+1)^2}{(x-t^{\alpha+1,\alpha+1}_{1,d})}\left(  p^{\alpha+1,\alpha+1}_{d}(x) \right)^2  &\text{ if } t^{\alpha+1,\alpha}_{1,d}< \cos(\theta') \leq t^{\alpha+1,\alpha+1}_{1,d} ,
\\
\frac{(x+1)}{(x-t^{\alpha+1,\alpha}_{1,d})}\left(  p^{\alpha+1,\alpha}_{d}(x) \right)^2  &\text{ if } t^{\alpha+1,\alpha+1}_{1,d-1}< \cos(\theta') \leq t^{\alpha+1,\alpha}_{1,d},
\end{cases}
\end{equation*}
and
\[
M_{\textup{Lev}}(n-1,\theta') = \cal{L}(g)=M_{\textup{Lev}}(n-1,\arccos(t^{\alpha+1,\alpha+\varepsilon}_{1,d})). 
\]
We also have
\[
\lambda_n(\theta,\theta') \leq \lambda_n(\theta,\arccos(t^{\alpha+1,\alpha+\varepsilon}_{1,d}) ),
\]
where the above follows from $\cos(\theta^{\prime})\leq t^{\alpha+1,\alpha+\varepsilon}_{1,d}$ and $\lambda_n(\theta,\theta^{\prime})$ is the ratio of volume of the spherical cap with radius $\frac{\sin(\theta/2)}{\sin(\theta^{\prime}/2)}$ on the unit sphere $S^{n-1}$ to the volume of the whole sphere. Hence,
\[
\frac{M_{\textup{Lev}}(n-1,\arccos(t^{\alpha+1,\alpha+\varepsilon}_{1,d}))}{\lambda_n(\theta,\arccos(t^{\alpha+1,\alpha+\varepsilon}_{1,d}))} \leq \frac{M_{\textup{Lev}}(n-1,\theta')}{\lambda_n(\theta,\theta')}.
\]

As before, $s=\cos(\theta),$ and $s'=\cos(\theta')=t^{\alpha+1,\alpha+\varepsilon}_{1,d}.$ Note that $s^{\prime}<s$ and for $r=\sqrt{\frac{s-s^{\prime}}{1-s^\prime}}$, we have $s^{\prime}=\frac{s-r^2}{1-r^2}$ and $0< r<1$. Let $0<\delta=O(\frac{1}{n})$ that we specify later, and define the function $F$ for the application of Proposition~\ref{lift} to be
\[
F(y)=\chi(y)
:=\begin{cases}
1 &\text{ for } r-\delta <y\leq R,
\\
0 &\text{otherwise}.
\end{cases}
\]

Recall from~\eqref{htdensity} that
\[h(t):=\int_{-1}^1g(x)\mu(x,t;\chi)dx.\]
 Applying Proposition~\ref{lift} with the function $F$ as above and the function $g$, we obtain the inequality in the statement of the theorem with $c_n\leq 1+o(1)$. We now prove the desired bound on $c_n$ for sufficiently large $n$.\\
\\
By Proposition~\ref{introcorollary}, for any $\delta_1>0$, if $|s'-\cos(\theta^*)|\geq \delta_1$, then for large $n$, $M_{\textup{Lev}}(n-1,\theta')/\lambda_n(\theta,\theta')$ is exponentially worse than $M_{\textup{Lev}}(n-1,\theta^*)/\lambda_n(\theta,\theta^*)$. Therefore, we assume that $s'=\cos(\theta')=t_{1,d}^{\alpha+1,\alpha+\varepsilon}$ for some $\varepsilon\in\{0,1\}$, and that $s'$ is sufficiently close to $\cos\theta^*$. We specify the precision of the difference later. Suppose $\varepsilon=0$, and so $s'=t^{\alpha+1,\alpha}_{1,d}$. By Proposition~\ref{prop:linapproximation}, we have
\[g(x)=(x+1)(x-s^{\prime})\frac{dp_d^{\alpha+1,\alpha}}{dt}(s^{\prime})^2(1+A(x))^2,\]
where
\begin{equation}\label{ax}|A(x)|\leq \frac{e^{\sigma(x)}-1}{\sigma(x)}-1\end{equation}
with
\[\sigma(x):=\frac{|x-s'|(ns'+s'+1)}{1-s'^2}.\]
By Proposition~\ref{triples}, we have for $|x-s'|=o\left(\frac{1}{\sqrt{n}}\right)$ the estimate
\[
\mu(x;s,\chi)=\left(\frac{2(1-r^2)^2}{r(1-s)}+o(1)\right)\left(\delta+\sqrt{\frac{s-x}{1-x}}-r  \right)^+  \left(\left(\frac{1-x^2}{x^2}\right) \left(s-r^2\right)^2\right)^{\frac{n-4}{2}}e^{\left(-\frac{2nr\left(\sqrt{\frac{s-x}{1-x}}-r \right)}{ s-r^2}  \right)}.
\]
We need to find the maximal $\delta>0$ such that 
\[
h(t)=\int_{-1}^1 g(x)\mu(x;t,\chi) dx \leq 0
\]
for every $-1\leq t\leq s$. We first address the above inequality for $t=s$. Note that the integrand is negative for $x<s^{\prime}$ and positive for $x>s^{\prime}.$ Hence,
\begin{eqnarray*}
\int_{-1}^1 g(x)\mu(x;s,\chi)dx=
\left|\int_{s'}^1 g(x)\mu(x;s,\chi)dx\right| - \left|\int_{-1}^{s'} g(x)\mu(x;s,\chi)dx\right|.
\end{eqnarray*}
Its non-positivity is equivalent to
\[\left|\int_{-1}^{s'} g(x)\mu(x;s,\chi)dx\right|\geq\left|\int_{s'}^1 g(x)\mu(x;s,\chi)dx\right|.\]
We proceed to give a lower bound on the absolute value of the integral over $-1\leq x\leq s^{\prime}.$ Later, we give an upper bound on the right hand side. By \eqref{ax}, we have 
\[
|1+A(x)|\geq  \left(2-\frac{e^{\sigma(x)}-1}{\sigma(x)}\right)^+.
\]

We note that $2-\frac{e^{\sigma}-1}{\sigma}$ is a concave function with value 1 as $\sigma\rightarrow 0$ and a root at $\sigma=1.256431...$. Hence,
for $\sigma<1.25643$, we have 
\[
|1+A(x)|\geq   \left(2-\frac{e^{\sigma(x)}-1}{\sigma(x)}\right).
\]
Note that $\sigma(x)<1.25643$ implies 
\[
|x-s'| \leq \frac{(1.25643)(1-s'^2)}{ns'}.
\]
Let 
\[
\lambda:=\frac{(1.25643)(1-s'^2) }{ns'}.
\]
Therefore, as $n\rightarrow\infty$
\begin{align}\label{lowerint}
& \frac{r(1-s)}{2(1-r^2)^2}\left( s-r^2\right)^{-(n-4)} \frac{dp_d^{(\alpha+1,\alpha)}}{dt}(s^{\prime})^{-2}\left|\int_{-1}^{s'} g(x)\mu(x;s,\chi)dx\right|
\\&
\gtrsim  
\int_{s'-\lambda}^{s'}(1+x) (s^{\prime}-x)\left(2-\frac{e^{\sigma(x)}-1}{\sigma(x)}\right)^2\left(\delta+\sqrt{\frac{s-x}{1-x}}-r  \right)  \left(\frac{1-x^2}{x^2}\right)^{\frac{n-4}{2}}e^{\left(-\frac{2nr\left(\sqrt{\frac{s-x}{1-x}}-r \right)}{ s-r^2}  \right)}dx. \nonumber
\end{align}
We change the variable $s'-x$ to $z$ and note that
\begin{equation}\label{taylor1}
\sqrt{\frac{s-x}{1-x}}-r =\frac{z(1-s) }{2(s-s')^{1/2}(1-s')^{3/2}}+O(\lambda^2)
\end{equation}
\begin{equation}\label{taylor2}
\frac{1-x^2}{x^2}= \frac{1-s'^2}{s'^2}\left(1+\frac{2z}{s'(1-s'^2)}+ O(\lambda^2)\right)
\end{equation}
for $|x-s'|<\lambda.$
Hence, we obtain that as $n\rightarrow\infty$ and $|x-s'|<\lambda$,
\begin{eqnarray*}
e^{\left(-\frac{2nr\left(\sqrt{\frac{s-x}{1-x}}-r \right)}{ s-r^2}  \right)}&=& e^{\left(-\frac{2nr\left(\frac{z(1-s) }{2(s-s')^{1/2}(1-s')^{3/2}} \right)}{ s-r^2}  \right)+O(\lambda)}=e^{\left(\frac{-nz}{s'(1-s')}  \right)+O(\lambda)}
\\
 \left(\frac{1-x^2}{x^2}\right)^{\frac{n-4}{2}}&=&\left(\frac{1-s'^2}{s'^2}\right)^{\frac{n-4}{2}} e^{\left(\frac{nz}{s'(1-s'^2)} \right)+O(\lambda)}
 \\
 2-\frac{e^{\sigma(x)}-1}{\sigma(x)}&=&  \left(2-\frac{e^{\frac{nzs'}{(1-s'^2)}}-1}{\frac{nzs'}{(1-s'^2)}}\right)(1+O(\lambda)).
 \end{eqnarray*}
for $|x-s'|<\lambda.$ We replace the above asymptotic formulas and obtain that as $n\rightarrow\infty$, the right hand side of \eqref{lowerint} is at least
\[
(1+s')\left(\frac{1-s'^2}{s'^2}\right)^{\frac{n-4}{2}} 
\int_{0}^{\lambda}z \left(2-\frac{e^{\frac{nzs'}{(1-s'^2)}}-1}{\frac{nzs'}{(1-s'^2)}} \right)^2\left(\delta+\frac{z(1-s) }{2(s-s')^{1/2}(1-s')^{3/2}} \right) 
e^{\left(-\frac{nz}{1-s'^2}\right)} dz.
\]
We now give an upper bound on the absolute value of the integral over $s'\leq x\leq 1.$  We note that 
\[
\sqrt{\frac{s-x}{1-x}}-r =\frac{z(1-s) }{2(s-s')^{1/2}(1-s')^{3/2}}+O(\lambda^2).
\]
Let $\Lambda:= \frac{2(s-s')^{1/2}(1-s')^{3/2}\delta}{(1-s)}$. We have
\[\left(\delta+\sqrt{\frac{s-x}{1-x}}-r  \right)^+=0\]
for $x-s'>\Lambda.$ We have
\[|1+A(x)|\leq \frac{e^{\sigma(x)}-1}{\sigma(x)},\]
where
\[\sigma(x)=\frac{n|x-s'|s'}{1-s^{'2}}+o(1).\]
Therefore,
\begin{eqnarray*}
&&\frac{r(1-s)}{2(1-r^2)^2}\left( s-r^2\right)^{-(n-4)} \frac{dp_d^{(\alpha+1,\alpha)}}{dt}(s^{\prime})^{-2}\left|\int_{s'}^{1} g(x)\mu(x;s,\chi)dx\right| 
\\
&\lesssim& (1+s')  \left(\frac{1-s'^2}{s'^2}\right)^{\frac{n-4}{2}} 
\int_{0}^{\Lambda}z\left(\frac{e^{\frac{nzs'}{(1-s'^2)}}-1}{\frac{nzs'}{(1-s'^2)}} \right)^2\left(\delta-\frac{z(1-s) }{2(s-s')^{1/2}(1-s')^{3/2}} \right) 
e^{\left(\frac{nz}{1-s'^2} \right)} dz.
\end{eqnarray*}
We choose $\delta$ such that 
\[
\begin{split}
\int_{0}^{\lambda}z \left(2-\frac{e^{\frac{nzs'}{(1-s'^2)}}-1}{\frac{nzs'}{(1-s'^2)}} \right)^2\left(\delta+\frac{z(1-s) }{2(s-s')^{1/2}(1-s')^{3/2}} \right) 
e^{\left(-\frac{nz}{1-s'^2} \right)} dz
\\
 \geq  \int_{0}^{\Lambda}z\left(\frac{e^{\frac{nzs'}{(1-s'^2)}}-1}{\frac{nzs'}{(1-s'^2)}} \right)^2\left(\delta-\frac{z(1-s) }{2(s-s')^{1/2}(1-s')^{3/2}} \right) 
e^{\left(\frac{nz}{1-s'^2} \right)} dz.
\end{split}
\]
For large enough $n$ we may replace the numerical value $\cos(1.0995124)$ for $s'$ as we may assume that $s'$ is close to $\cos(\theta^*)$. Furthermore, write $v:=nz$, and divide by $\delta$ to  obtain 
\[
\begin{split}
\int_{0}^{2.196823}v \left(2-\frac{e^{0.571931v}-1}{0.571931v} \right)^2\left(1+\frac{v}{n\Lambda} \right) 
e^{\left(-1.259674v \right)} dv
\\
 \geq  \int_{0}^{n\Lambda}v\left(\frac{e^{0.571931v}-1}{0.571931v}\right)^2\left(1-\frac{v}{n\Lambda} \right) 
e^{\left(1.259674v\right)} dv.
\end{split}
\]
Here, we have also used that $\Lambda=\frac{2\delta(s-s')^{1/2}(1-s')^{3/2}}{(1-s) }$. Also, note that $n\lambda=2.196823...$ when $s'$ is near $\cos(1.0995124)$. 
We have the Taylor expansion around $v=0$
\begin{eqnarray*}&&v \left(\frac{e^{0.571931v}-1}{0.571931v} \right)^2e^{\left(1.259674v \right)}\\&=&v + 1.83161 v^2 + 1.70465 v^3 + 1.07403 v^4 + 0.514959 v^5 + 0.200242 v^6 + 0.0657225 v^7 \\&+& 0.0187113 v^8 + 0.00471321 v^9 + 0.0010662 v^{10} + 0.000219143 v^{11} + 0.0000413083 v^{12} + Er\end{eqnarray*}
with error $|Er|<2\times 10^{-5}$ if $n\Lambda<0.92$, which we assume to be the case. Simplifying, we want to find the maximal $n\Lambda$ such that
\begin{eqnarray*}
&&0.195783+\frac{0.140655}{n\Lambda}\geq\int_{0}^{n\Lambda}v\left(\frac{e^{0.571556v}-1}{0.571556v}\right)^2\left(1-\frac{v}{n\Lambda} \right) 
e^{\left(1.259392v\right)} dv
\\&=&3.17756\times 10^{-6} (n\Lambda)^2 \Bigg((n\Lambda)^{11} + 5.74715 (n\Lambda)^{10} + 30.5037 (n\Lambda)^9 + 148.328 (n\Lambda)^8 + 654.286 (n\Lambda)^7 \\&+& 2585.41 (n\Lambda)^6 + 9002.5 (n\Lambda)^5 + 27010.2 (n\Lambda)^4 + 67600.9 (n\Lambda)^3 + 134116(n\Lambda)^2 + 192140(n\Lambda) + 157353\Bigg)\\
&-&\frac{1}{n\Lambda}\left(-0.360653 + 2.42691 e^{1.25967(n\Lambda)} - 3.33819 e^{1.83161(n\Lambda)} + 1.27193 e^{2.40354(n\Lambda)}\right)+Er,
\end{eqnarray*}
where the error $Er$ again satisfies $|Er|\leq 2\times 10^{-5}$. A numerical computation gives us that the inequality is satisfied when $n\Lambda\leq 0.915451...$. Consequently, if we choose $\delta=\ell/n$, then we must have
\[\ell\leq 0.915451...\frac{(1-s)}{2(s-s')^{1/2}(1-s')^{3/2}}.\]
In this case, the cap of radius $\sqrt{1-r^2}$ becomes $\sqrt{1-(r-\delta)^2}=\sqrt{1-r^2}\left(1+\frac{\ell r}{n(1-r^2)}\right)+O(1/n^2)$. Note that $r=\sqrt{\frac{s-s'}{1-s'}}$, and so
\begin{eqnarray*}
\frac{r}{1-r^2}=\frac{(s-s')^{1/2}(1-s')^{1/2}}{1-s}.
\end{eqnarray*}
We deduce that,
\[\frac{\ell r}{(1-r^2)}\leq 0.915451...\frac{(1-s)}{2(s-s')^{1/2}(1-s')^{3/2}}\cdot \frac{(s-s')^{1/2}(1-s')^{1/2}}{1-s}=\frac{0.457896862...}{1-s'}=0.83837237...\]
This computation shows that for sufficiently large $n$, for any $0\leq\delta\leq \frac{0.83837237(1-r^2)}{nr}$, $h(s)\leq 0$. We now show that $h(t)\leq 0$ for $-1\leq t<s$. Note that
\[r(t):=\sqrt{\frac{(t-s^{\prime})^+}{1-s^\prime}}<\sqrt{\frac{s-s^{\prime}}{1-s^\prime}}=r.\]
If $s-t$ is of order greater than $O(1/n)$, then $r(t)<r-\delta$ for large $n$, and so $h(t)\leq 0$ as the integrand is negative in this case. Therefore, we may assume that $-1\leq t<s$ and $s-t=O(1/n)$. For such $t$, replacing $s$ with $t$ in the above calculations shows that the negativity $h(t)\leq 0$ is true for all
$0\leq\delta\leq \frac{0.83837237(1-r(t)^2)}{nr(t)}.$ Since $r(t)<r$, $\frac{0.83837237(1-r(t)^2)}{nr(t)}>\frac{0.83837237(1-r^2)}{nr}$. Consequently, $h(t)\leq 0$ for every $-1\leq t\leq s$ whenever $0\leq \delta\leq \frac{0.83837237(1-r^2)}{nr}$.\\
\\
Similarly, one  obtains the same conclusion when $\varepsilon=1$, that is  $s^{\prime}=t^{\alpha+1,\alpha+1}_{1,d}$. Therefore, our improvement to Levenshtein's bound on $M(n,\theta)$ for large $n$ is by a factor of $1/e^{0.83837237...}=0.432413...$ for any choice of angle $0<\theta<\theta^*$. As the error in our computations is less than $2\times 10^{-5}$, we deduce that we have an improvement by a factor of $0.4325$ for sufficiently large $n$.
\end{proof}

\subsection{Improving bound on sphere packing densities}\label{improvingspherepackings}
In this subsection, we give our improvement to  Cohn and Zhao's~\cite{CohnZhao} bound on sphere packings. Recall that in the case of sphere packings, we let $s=\cos(\theta)$, $r=\frac{1}{\sqrt{2(1-s)}}$, and $\alpha=\frac{n-3}{2}$. By assumption, $\frac{1}{3}\leq s\leq \frac{1}{2}$. In this case, we define for each $0<\delta=\frac{c_1}{n}$ the function $F$ to be used in Proposition~\ref{liftsphere} to be
\[
F(y):=\chi(y)
:=\begin{cases}
1 &\text{ for } 0 \leq y\leq r+\delta,
\\
0 &\text{otherwise}.
\end{cases}
\]
\begin{proof}[Proof of Theorem~\ref{spherepacking}]As in the proof of Theorem~\ref{Thm1general}, we consider $g=g_{n,\theta}\in D(d\mu_{\frac{n-3}{2}},\theta)$. Note that by Proposition~\ref{liftsphere} applied to the function $F$ above and $g$, we have for the function
\[H(T)=\int_{-1}^1g(x)\mu(x;T,\chi)dx\]
of equation~\eqref{Htdensity}, the inequality
\[\delta_{n}\leq \frac{\cal{L}(g)}{(2(r+\delta))^{n}}=\frac{M_{\textup{Lev}}(n,\theta)}{(2(r+\delta))^{n}}\]
if $\delta>0$ is chosen such that $H(T)\leq 0$ for $T\geq 1$. We wish to maximize $\delta$ under this negativity condition. Let $n\geq 2000$ and $s$ be a root of the Jacobi polynomial as before. As in the proof of Theorem~\ref{mainbound}, we begin by considering that case where $\varepsilon=0$, that is, $s=t^{\alpha+1,\alpha}_{1,d}$, and take $g$ for this $s$. We show that $H(1)\leq 0$. For the other $T$, showing $H(T)\leq 0$ follows similarly by using Lemma~\ref{scalinglemma}.
Note that
\begin{eqnarray*}
\int_{-1}^1 g(x)\mu(x;\chi)dx=\left|\int_{s}^1 g(x)\mu(x;\chi)dx\right| - \left|\int_{-1}^{s} g(x)\mu(x;\chi)dx\right|.
\end{eqnarray*}
To show $H(1)\leq 0$, it suffices to show that
\begin{equation}\label{ineq0}
\left|\int_{-1}^{s} g(x)\mu(x;\chi)dx\right|\geq \left|\int_{s}^1 g(x)\mu(x;\chi)dx\right|.
\end{equation}
First, we give a lower bound for the left hand side of equation~\eqref{ineq0}. As before, Proposition~\ref{prop:linapproximation} allows us to write
\[g(x)=(x+1)(x-s^{\prime})\frac{dp_d^{\alpha+1,\alpha}}{dt}(s^{\prime})^2(1+A(x))^2,\]
where
\begin{equation*}|A(x)|\leq \frac{e^{\sigma(x)}-1}{\sigma(x)}-1\end{equation*}
with
\[\sigma(x):=\frac{|x-s|(ns-s+1)}{1-s^2}.\]
 
As before, 
\[
|1+A(x)|\geq   \left(2-\frac{e^{\sigma(x)}-1}{\sigma(x)}\right)^+.
\]
Note that if $\sigma(x)<1.25643$, ensuring that the right hand side of the bound on $1+A(x)$ is non-negative, then 
\[
|x-s| \leq \frac{1.25643(1-s^{2}) }{ns+1}.
\]
Let 
\[
\lambda:=\frac{1.25643(1-s^{2}) }{ns+1}.
\]
Let
\[r_1:=\sqrt{1-(1-x^2)(r+\delta)^2}+x(r+\delta).\]

Suppose that $|x-s|\leq \frac{c_2}{n}$ and $\delta=\frac{c_1}{n}$ for some $0<c_1<0.81$, $0<c_2<3.36$. Note that for such a $c_1$ and $n\geq 2000$, the assumption $s\leq 1/2$ implies that $r+\delta\leq 1$. By Proposition~\ref{triple0}, we have $\mu(x;\chi)=0$ for $r_1\geq r+\delta.$ Otherwise,
\[
\mu(x;\chi)=C_n(1+E)(1-x^2)^{\frac{n-4}{2}} \left( \frac{1}{2(1-x)}\right)^{{n-1}} \frac{1}{\sqrt{1-x}}\sqrt{1+\left(x-\frac{(1-x^2)r}{\sqrt{1-(1-x^2)r^2}}\right)^2}\left(r+\delta- r_1  \right)
\]
for some constant $C_n>0$ making $\mu(x;\chi)$ a probability measure on $[-1,1]$, and where  $|E|<\frac{(4c_2+2c_1+2)^2}{n}$ for $n\geq 2000$. In particular, for such $c_1,c_2$, we have $|E|<\frac{292}{n}$. By letting $z:=s-x$ and $v:=\frac{(ns+1)z}{(1-s^{2})}$, Taylor expansion approximations imply that inequality~\eqref{ineq0} is satisfied for $\frac{1}{3}\leq s\leq \frac{1}{2}$ and large $n$ if
\begin{equation}
\int_0^{1.25643}v\left(2-\frac{e^v-1}{v}\right)^2e^{-3v}\left(1+\frac{3v}{2c_1}\right)dv\geq \int_0^{\frac{2c_1}{3}}v\left(\frac{e^v-1}{v}\right)^2e^{3v}\left(1-\frac{3v}{2c_1}\right)dv.
\end{equation}
By a numerics similar to that done for spherical codes, one finds that the maximal such $c_1<1$ is $0.66413470...$. Therefore, for every $\frac{1}{3}\leq s\leq \frac{1}{2}$ and $n\rightarrow\infty$, we have an improvement at least as good as
\[e^{-0.6641347/r}= \exp(-0.6641347\sqrt{2(1-s)}).\]
Note that for such $s$, as $n\rightarrow\infty$, this gives us an improvement of at most $0.5148$, a universal such improvement factor.\\
\\
Returning to the case of general $n\geq 2000$ and $\frac{1}{3}\leq s\leq \frac{1}{2}$, given such an $n$ we need to maximize $c_1<0.81$ such that

\begin{eqnarray*}
&&\left(1-\frac{312}{n}\right)\int_0^{1.25643}v\left(2-\frac{e^v-1}{v}\right)^2e^{-3v}\left(1+\frac{1.499v}{c_1}-\frac{17.31}{c_1n}\right)dv\\
&\geq&\left(1+\frac{312}{n}\right)\int_0^{0.667c_1}v\left(\frac{e^v-1}{v}\right)^2e^{3v}\left(1-\frac{1.499v}{c_1}+\frac{17.31}{c_1n}\right)dv.
\end{eqnarray*}

By a numerical calculation with \texttt{Sage}, we obtain that the improvement factor for any $\frac{1}{3}\leq s\leq \frac{1}{2}$ and any $n\geq 2000$ is at least as good as
\[0.515+74/n.\]
On the other hand, if we fix $s$ such that $s$ is sufficiently close to $s^*=\cos(\theta^*)$, then the same kind of calculations as above give us an asymptotic improvement constant of $0.4325$, the same as in the case of spherical codes. In fact, for $n\geq 2000$, we have an improvement factor at least as good as
\[0.4325+51/n\]
over the combination of Cohn--Zhao~\cite{CohnZhao} and Levenshtein's optimal polynomials~\cite{Leven79}.\\
\\
The case $s=t^{\alpha+1,\alpha+1}_{1,d}$ follows in exactly the same way. This completes the proof of our theorem.
\end{proof}
\begin{remark}
We end this section by saying that our improvements above are based on a \textit{local} understanding of Levenshtein's optimal polynomials, and that there is a loss in our computations. By doing numerics, we may do computations without having to rely on such local approximations.
\end{remark}
\section{Numerics}\label{numerics}
 In Theorem~\ref{spherepacking}, we improved  the sphere packing densities by a factor of $0.4325$ for sufficiently high dimensions. There is a loss in our estimates due to neglecting the  contribution of Levenshtein's polynomials away from their largest roots, as giving rigorous estimates is difficult.  In this section, we numerically investigate the behavior of our constant improvement factors for sphere packing densities by considering those neglected terms in dimensions up to $130$. As we noted before in the introduction, in low dimensions, there are better bounds on sphere packing densities using semi-definite programming, and so the objective of this section is guessing the improvement over $0.4325$ in high dimensions.\\

Before our work, the best known upper bound on sphere packing densities in high dimensions was obtained using inequalities
\[
\delta_n\leq\sin^n(\theta/2)\textup{Lev}(n,\theta),
\]
where $\text{Lev}(n,\theta)$ is the linear programming bound using Levenshtein's optimal polynomials~\cite[eq.(3),(4)]{Leven79}. Note that 
\(
\text{Lev}(n,\theta)\leq M_{\textup{Lev}}(n,\theta)
\)
and equality occurs when $\cos(\theta)=t^{\alpha+1,\alpha+\varepsilon}_{1,d}.$ In this section, we apply Proposition~\ref{liftsphere} to Levenshtein's optimal polynomials for various angles $\theta $ (columns of Table~\ref{improvementtable}) and obtain
\[
\delta_n\leq \alpha_n(\theta) \sin^n(\theta/2)\text{Lev}(n,\theta)
\]
with maximal $r$ (in Proposition~\ref{liftsphere}), where $\alpha_n(\theta)$ are the entries of the table. Note that in Table~\ref{improvementtable}, the improvement factors appear to gradually become independent of $\theta$ as $n$ enlarges. We conjecture that they tend to $\frac{1}{e}$ as $n\rightarrow\infty$.

\begin{table}[h!]
\centering
\addtolength{\leftskip} {-2cm}
\addtolength{\rightskip}{-2cm}
\scalebox{0.60}{
\begin{tabular}{c|c|c|c|c|c|c|c|c|c|c|c|c|c|c|c|c|c|c|c|c|c|c|c|c|c|c|c|c|c|c}
\backslashbox{$n$}{$\theta$}& $61^{\circ}$ & $62^{\circ}$ & $63^{\circ}$ & $64^{\circ}$ & $65^{\circ}$ & $66^{\circ}$ & $67^{\circ}$ & $68^{\circ}$ & $69^{\circ}$ & $70^{\circ}$ & $71^{\circ}$ & $72^{\circ}$ & $73^{\circ}$ & $74^{\circ}$ & $75^{\circ}$ & $76^{\circ}$ & $77^{\circ}$ & $78^{\circ}$ & $79^{\circ}$ & $80^{\circ}$ & $81^{\circ}$ & $82^{\circ}$ & $83^{\circ}$ & $84^{\circ}$ & $85^{\circ}$ & $86^{\circ}$ & $87^{\circ}$ & $88^{\circ}$ & $89^{\circ}$ & $90^{\circ}$\\
\hline
4&.942&.889&.839&.793&.750&.711&.674&.640&.608&.578&.550&.524&.500&.477&.456&.436&.417&.399&.392&.396&.400&.405&.409&.413&.416&.418&.420&.420&.420&.419\\
5&.928&.863&.803&.748&.698&.653&.611&.572&.537&.504&.474&.446&.420&.400&.374&.371&.377&.382&.387&.392&.396&.401&.404&.408&.410&.412&.412&.412&.411&.408\\
6&.915&.838&.768&.706&.650&.599&.553&.512&.474&.439&.408&.385&.389&.393&.397&.368&.374&.379&.384&.389&.393&.398&.401&.404&.406&.407&.407&.406&.404&.401\\
7&.901&.813&.735&.666&.605&.550&.501&.457&.418&.395&.373&.378&.383&.388&.391&.395&.371&.377&.382&.387&.391&.395&.398&.401&.403&.404&.403&.402&.400&.396\\
\hline
8&.888&.789&.704&.629&.563&.505&.454&.409&.394&.394&.393&.373&.378&.383&.387&.391&.369&.374&.380&.385&.389&.393&.396&.399&.400&.401&.401&.399&.396&.392\\
9&.874&.766&.673&.593&.524&.464&.411&.389&.391&.392&.392&.391&.373&.378&.383&.387&.391&.372&.378&.383&.387&.391&.394&.397&.398&.399&.398&.397&.394&.389\\
10&.862&.744&.644&.560&.488&.426&.382&.386&.389&.390&.391&.391&.389&.374&.379&.384&.388&.371&.376&.381&.385&.389&.393&.395&.397&.397&.397&.395&.391&.387\\
11&.849&.722&.617&.528&.454&.391&.378&.382&.386&.388&.390&.390&.389&.370&.376&.380&.385&.369&.374&.379&.384&.388&.391&.394&.395&.396&.395&.393&.390&.385\\
\hline
12&.836&.701&.590&.498&.422&.384&.387&.379&.383&.386&.388&.389&.389&.387&.372&.377&.382&.386&.373&.378&.383&.387&.390&.393&.394&.395&.394&.392&.388&.384\\
13&.824&.681&.565&.470&.393&.380&.384&.375&.380&.383&.386&.388&.388&.387&.385&.375&.380&.384&.371&.376&.381&.385&.389&.392&.393&.394&.393&.391&.387&.382\\
14&.811&.661&.540&.444&.387&.376&.380&.384&.377&.381&.384&.387&.388&.387&.386&.372&.377&.382&.370&.375&.380&.384&.388&.391&.392&.393&.392&.390&.386&.381\\
15&.799&.642&.517&.419&.386&.386&.377&.381&.384&.378&.382&.385&.387&.387&.386&.370&.375&.380&.384&.374&.379&.383&.387&.390&.391&.392&.391&.389&.385&.380\\
\hline
16&.788&.623&.495&.395&.385&.386&.384&.378&.382&.376&.380&.383&.386&.386&.386&.384&.373&.378&.382&.373&.378&.382&.386&.389&.391&.391&.390&.388&.385&.380\\
17&.776&.605&.474&.381&.384&.385&.385&.375&.380&.383&.378&.382&.384&.386&.386&.384&.371&.376&.381&.371&.377&.381&.385&.388&.390&.391&.390&.388&.384&.379\\
18&.764&.587&.453&.378&.382&.384&.385&.383&.377&.381&.376&.380&.383&.385&.385&.384&.382&.374&.379&.370&.376&.380&.384&.387&.389&.390&.389&.387&.383&.378\\
19&.753&.570&.434&.383&.380&.383&.384&.384&.375&.379&.373&.378&.382&.384&.385&.384&.382&.373&.378&.369&.375&.379&.383&.387&.389&.389&.389&.387&.383&.378\\
\hline
20&.742&.553&.415&.381&.377&.381&.383&.384&.382&.377&.381&.376&.380&.383&.385&.384&.383&.371&.376&.381&.374&.379&.383&.386&.388&.389&.388&.386&.382&.377\\
21&.731&.537&.397&.379&.382&.379&.382&.383&.383&.375&.379&.374&.379&.382&.384&.384&.383&.369&.375&.379&.373&.378&.382&.385&.388&.388&.388&.386&.382&.377\\
22&.720&.522&.383&.376&.380&.377&.381&.383&.383&.381&.377&.381&.377&.381&.383&.384&.383&.380&.373&.378&.372&.377&.381&.385&.387&.388&.388&.385&.382&.376\\
23&.709&.506&.383&.382&.375&.381&.379&.382&.383&.382&.375&.379&.376&.380&.382&.384&.383&.381&.372&.377&.371&.376&.381&.384&.387&.388&.387&.385&.381&.376\\
\hline
24&.699&.492&.382&.382&.376&.380&.377&.381&.382&.382&.373&.378&.374&.378&.381&.383&.383&.381&.371&.376&.370&.375&.380&.384&.386&.387&.387&.385&.381&.375\\
25&.688&.477&.381&.382&.381&.378&.375&.379&.382&.382&.380&.376&.372&.377&.381&.383&.383&.381&.370&.375&.369&.374&.379&.383&.386&.387&.387&.384&.380&.375\\
26&.678&.463&.379&.381&.381&.376&.380&.378&.381&.382&.381&.375&.379&.376&.380&.382&.383&.382&.379&.374&.369&.374&.379&.383&.385&.387&.386&.384&.380&.375\\
27&.668&.450&.377&.380&.381&.380&.378&.376&.380&.381&.381&.373&.377&.375&.379&.381&.382&.382&.379&.373&.378&.373&.378&.382&.385&.386&.386&.384&.380&.375\\
\hline
28&.658&.437&.380&.379&.381&.381&.376&.375&.378&.381&.381&.379&.376&.373&.378&.381&.382&.382&.379&.372&.377&.373&.372&.382&.384&.386&.386&.384&.380&.375\\
29&.648&.424&.379&.378&.380&.381&.375&.379&.377&.380&.381&.380&.375&.372&.377&.380&.382&.382&.380&.371&.376&.372&.377&.381&.384&.386&.385&.384&.380&.374\\
30&.639&.411&.377&.376&.379&.381&.379&.377&.376&.379&.381&.380&.373&.378&.376&.379&.381&.382&.380&.370&.375&.372&.377&.381&.384&.385&.385&.383&.379&.374\\
31&.629&.400&.379&.379&.378&.380&.380&.376&.374&.378&.380&.380&.372&.377&.374&.378&.381&.382&.380&.369&.374&.371&.376&.380&.383&.385&.385&.383&.379&.374\\
\hline
32&.620&.388&.380&.377&.377&.380&.380&.374&.378&.377&.380&.380&.378&.375&.373&.378&.380&.381&.380&.377&.374&.370&.376&.380&.383&.385&.385&.383&.379&.374\\
33&.611&.379&.380&.376&.375&.379&.380&.378&.377&.376&.379&.380&.379&.374&.372&.377&.380&.381&.380&.371&.373&.370&.375&.379&.383&.384&.385&.383&.379&.374\\
34&.602&.378&.380&.379&.378&.378&.380&.379&.376&.375&.378&.380&.379&.373&.371&.376&.379&.381&.380&.378&.372&.369&.375&.379&.382&.384&.384&.383&.379&.374\\
35&.593&.377&.379&.379&.377&.377&.379&.379&.375&.374&.378&.380&.379&.372&.376&.375&.379&.381&.380&.378&.371&.369&.374&.379&.382&.384&.384&.382&.379&.373\\
\hline
36&.584&.375&.379&.379&.376&.375&.378&.379&.373&.377&.377&.379&.379&.377&.376&.374&.378&.380&.380&.378&.371&.376&.374&.378&.382&.384&.384&.382&.379&.373\\
37&.575&.378&.378&.379&.374&.374&.378&.379&.378&.376&.376&.379&.379&.378&.375&.373&.377&.380&.380&.379&.370&.375&.373&.378&.381&.384&.384&.382&.379&.373\\
38&.567&.376&.376&.379&.378&.377&.377&.379&.378&.375&.375&.378&.379&.378&.374&.373&.377&.380&.380&.379&.369&.375&.373&.378&.381&.383&.384&.382&.378&.373\\
39&.559&.375&.375&.378&.379&.376&.376&.378&.378&.374&.374&.377&.379&.378&.373&.372&.376&.379&.380&.379&.376&.374&.372&.377&.381&.383&.384&.382&.378&.373\\
\hline
40&.550&.378&.377&.377&.379&.375&.375&.378&.379&.373&.373&.377&.379&.378&.372&.376&.376&.379&.380&.379&.376&.374&.372&.377&.381&.383&.383&.382&.378&.373\\
41&.542&.379&.376&.376&.378&.377&.377&.377&.378&.377&.376&.376&.378&.379&.376&.376&.375&.378&.380&.379&.376&.373&.372&.376&.380&.383&.383&.382&.378&.373\\
42&.534&.378&.375&.375&.378&.378&.376&.376&.378&.377&.375&.375&.378&.379&.377&.375&.374&.378&.380&.379&.376&.372&.371&.376&.380&.382&.383&.382&.378&.373\\
43&.526&.378&.378&.374&.377&.378&.375&.375&.378&.378&.374&.374&.378&.379&.377&.374&.374&.377&.379&.379&.377&.372&.371&.376&.380&.382&.383&.381&.378&.373\\
\hline
44&.518&.377&.378&.376&.377&.378&.374&.374&.377&.378&.373&.373&.377&.378&.377&.373&.373&.377&.379&.379&.377&.371&.370&.375&.379&.382&.383&.381&.378&.372\\
45&.510&.377&.378&.375&.376&.378&.377&.373&.377&.378&.372&.372&.376&.378&.378&.372&.372&.376&.379&.379&.377&.371&.370&.375&.379&.382&.383&.381&.378&.372\\
46&.503&.376&.378&.374&.375&.378&.377&.376&.376&.378&.376&.376&.376&.378&.378&.372&.371&.376&.379&.379&.377&.370&.370&.375&.379&.382&.382&.381&.378&.372\\
47&.495&.374&.377&.377&.374&.377&.377&.375&.376&.378&.377&.375&.375&.378&.378&.371&.371&.375&.378&.379&.377&.370&.369&.375&.379&.381&.382&.381&.378&.372\\
\hline
48&.488&.376&.377&.377&.376&.376&.377&.374&.375&.377&.377&.374&.374&.377&.378&.376&.375&.375&.378&.379&.378&.369&.369&.374&.378&.381&.382&.381&.378&.372\\
49&.481&.375&.376&.377&.375&.376&.377&.373&.374&.377&.377&.373&.374&.377&.378&.376&.374&.374&.378&.379&.378&.369&.374&.374&.378&.381&.382&.381&.377&.372\\
50&.474&.374&.375&.377&.374&.375&.377&.376&.373&.377&.377&.372&.373&.377&.378&.376&.374&.374&.377&.379&.378&.374&.374&.374&.378&.381&.382&.381&.377&.372\\
\hline
60&.408&.375&.376&.376&.374&.376&.374&.375&.376&.373&.374&.376&.376&.374&.375&.377&.376&.373&.374&.377&.378&.376&.370&.371&.376&.379&.381&.380&.377&.371\\
70&.376&.374&.376&.375&.375&.373&.375&.375&.372&.375&.375&.372&.375&.375&.373&.374&.376&.375&.373&.374&.377&.377&.374&.369&.374&.378&.380&.379&.376&.371\\
80&.375&.374&.374&.374&.374&.374&.374&.373&.375&.373&.375&.374&.372&.375&.375&.373&.375&.376&.373&.371&.375&.377&.375&.370&.372&.377&.379&.379&.376&.371\\
90&.373&.374&.374&.374&.373&.374&.373&.374&.372&.374&.373&.374&.374&.372&.375&.374&.372&.375&.375&.371&.373&.376&.376&.372&.371&.375&.378&.379&.376&.370\\
\hline
100&.373&.374&.373&.373&.374&.372&.374&.372&.374&.372&.374&.373&.374&.373&.373&.374&.371&.373&.375&.373&.371&.375&.376&.373&.369&.374&.378&.378&.376&.370\\
110&.372&.372&.374&.373&.372&.374&.372&.374&.372&.373&.372&.374&.372&.374&.372&.374&.374&.370&.374&.374&.371&.373&.375&.374&.371&.373&.377&.378&.375&.370\\
120&.373&.373&.373&.373&.373&.372&.373&.372&.373&.373&.373&.373&.373&.373&.373&.371&.374&.372&.373&.374&.372&.372&.375&.374&.370&.373&.376&.377&.375&.370\\
130&.373&.373&.372&.372&.373&.373&.372&.373&.373&.371&.373&.373&.373&.372&.373&.370&.373&.373&.371&.374&.373&.370&.374&.375&.371&.372&.376&.377&.375&.370
\end{tabular}}
\caption{ Table of improvement factors $\alpha_n(\theta)$
\label{improvementtable}}
\end{table}

\bibliographystyle{alpha}

\bibliography{final}
\end{document}